\makeatletter\@addtoreset {equation}{section}\makeatother
\theoremstyle{plain}
\newtheorem{definition}{Definition}[section]
\newtheorem{theorem}[definition]{Theorem}
\newtheorem{corollary}[definition]{Corollary}
\newtheorem{lemma}[definition]{Lemma}
\newtheorem{remark}[definition]{Remark}
\newtheorem{conjecture}[definition]{Conjecture}
\DeclareMathOperator{\sech}{sech}
\newcommand*\diff{\mathop{}\!\mathrm{d}}
\newenvironment{proof1}%
{\begin{trivlist} \item[]{\em Proof }}%
{\hspace*{\fill}$\rule{.3\baselineskip}{.35\baselineskip}$\end{trivlist}}
\begin{document}

\title[Spectral Stability of Shifted States on Star Graphs]
{Spectral Stability of Shifted States on Star Graphs}

\author{Adilbek Kairzhan}
\address{Department of Mathematics, McMaster University, Hamilton, Ontario  L8S 4K1, Canada}
\email{kairzhaa@math.mcmaster.ca}

\author{Dmitry E. Pelinovsky}
\address{Department of Mathematics, McMaster University, Hamilton, Ontario  L8S 4K1, Canada}
\email{dmpeli@math.mcmaster.ca}

\thanks{This research was supported by the NSERC Discovery Grant.}
\date{\today}

\begin{abstract}
We consider the nonlinear Schr\"{o}dinger (NLS) equation with the subcritical power nonlinearity on a star graph
consisting of $N$ edges and a single vertex under generalized Kirchhoff boundary conditions.
The stationary NLS equation may admit a family of solitary waves parameterized by a translational parameter,
which we call the shifted states. The two main examples include
(i) the star graph with even $N$ under the classical Kirchhoff boundary conditions
and (ii) the star graph with one incoming edge and $N-1$ outgoing edges under a single constraint
on coefficients of the generalized Kirchhoff boundary conditions.
We obtain the general counting results on the Morse index of the shifted states and apply them
to the two examples. In the case of (i), we prove that the shifted states with even $N \geq 4$
are saddle points of the action functional which are spectrally unstable under the NLS flow.
In the case of (ii), we prove that the shifted states with the monotone profiles in the $N-1$ outgoing edges
are spectrally stable, whereas the shifted states with non-monotone profiles in the $N-1$ outgoing edges
are spectrally unstable, the two families intersect at the half-soliton states
which are spectrally stable but nonlinearly unstable under the NLS flow.
Since the NLS equation on a star graph with shifted states can be reduced to
the homogeneous NLS equation on an infinite line, the spectral instability of shifted states
is due to the perturbations breaking this reduction. We give a simple argument
suggesting that the spectrally stable shifted states are unstable
under the NLS flow due to the perturbations breaking the reduction to the NLS equation on 
an infinite line.
\end{abstract}

\maketitle


\section{Introduction}

It is well-known that multi-dimensional models can be reduced approximately to the PDEs on
metric graphs in many realistic physical experiments involving wave propagation in narrow waveguides
\cite{Beck,Joly1,Joly2,Kuch}. When a multi-dimensional narrow waveguide is replaced
by a graph consisting of edges and vertices, a metric space is introduced
for the wave functions on edges and is equipped with appropriate boundary conditions at vertices.
{\em Kirchhoff} boundary conditions appear naturally in many applications and require
the wave functions to be continuous across the
vertex points and the sum of the first derivatives of the wave functions to be zero,
see, e.g., books \cite{Kuchment,Exner}.

It is relatively less known that the Kirchhoff boundary conditions are not the only possible
boundary conditions arising when the narrow waveguides shrinks to a metric graph. By working with different
values of the thickness parameters vanishing in the limit at the same rate, it was shown in
\cite{Post} (see also \cite{Costa,EP2009,EP2013,Kosugi,Molchanov}) that
{\em generalized Kirchhoff} boundary conditions can also arise in the limit.
In the generalized Kirchhoff boundary conditions, the wave functions have finite jumps
across the vertex points and these jumps
are compensated reciprocally in the sum of the first derivatives of the wave function.
The Laplacian operator on the metric graphs with the generalized Kirchhoff boundary conditions
is still extended to a self-adjoint operator similarly to the case with the classical
Kirchhoff boundary conditions. Numerical confirmations of validity of the classical
and generalized Kirchhoff boundary conditions
are reported in a number of recent publications in physics literature \cite{Caputo,Sobirov1,Sobirov2}.

In a series of papers \cite{M1,M2,M3}, it was shown that if the parameters of the
generalized Kirchhoff boundary conditions on a star graph are related to the parameters
of the nonlinear evolution equations and satisfy a single constraint, then the nonlinear evolution equation
on the star graph can be reduced to the homogeneous equation on the infinite line. In other words,
singularities of the star graph are unfolded in the transformation and the vertex points become regular points on the line.
In this case, a transmission of a solitary wave through the vertex points will be reflectionless.

The novelty of this paper is to explore the star graphs with the generalized Kirchhoff boundary
conditions satisfying the constraint enabling reflectionless transmission of the solitary wave.
We focus our study on the case example of the nonlinear Schr\"{o}dinger (NLS) equation,
which is the simplest nonlinear evolution equation considered on metric graphs \cite{Noja}.
In the simplest configuration of a star graph with a single vertex, we study existence and
stability of stationary states of the NLS equation, which have a continuous parameter of
their translations along the graph. We call such stationary states as {\em the shifted states}.

The shifted states of the NLS equation on a star graph appear naturally in the case of classical Kirchhoff boundary
conditions when the number of edges is even. These states can be considered to be translations
of {\em the half-soliton states}, which exist for any number of edges and the nonlinear instability of which
was considered in our previous work \cite{KP} (see \cite{AdamiJPA} for the very first prediction of nonlinear
instability of the half-soliton states). In the variational characterization of the NLS stationary states
on a star graph, such shifted states were mentioned in Remarks 5.3 and 5.4 in \cite{AdamiJDE1}, where it was conjectured
that all shifted states are saddle points of the action functional and are thus unstable for all star graphs
with even number of edges exceeding two.

The purpose of this paper is to prove this conjecture with an explicit count of the Morse index for the shifted states.
By extending the Sturm theory to Schr\"{o}dinger operators on the star graph, we can give a
very precise characterization of the negative and zero eigenvalues of the linearized Schr\"{o}dinger
operators, avoiding the less explicit theory of deficiency index for star graphs with point interactions \cite{Pava}.
As a result of our analysis, we prove that these shifted states are saddle
points of energy subject to fixed mass, which are spectrally unstable under the NLS flow.
In comparison, the half-soliton states are degenerate saddle points of energy and they
are spectrally stable but nonlinearly unstable under the NLS flow.

We treat the star graph with an even number of edges as a particular example of the star graph
with the generalized Kirchhoff boundary conditions satisfying the constraint enabling
reflectionless transmission of the solitary wave \cite{M3}. In this more general context, we show
that the shifted states satisfy the reduction of the NLS equation on the star graph to
the homogeneous NLS equation on the infinite line. Nevertheless, we show that with one exception, the shifted states
are spectrally unstable in the time evolution due to perturbations that break this reduction.
Since numerical simulations have been performed in \cite{M1,M2,M3} with initial conditions satisfying
this reduction and with solitary waves transmitted across the vertex points, no instability of
the shifted states have been reported in the previous publications, to the best of our knowledge.

The only exception when the shifted states may be spectrally stable is the star graph with one
incoming edge and $N-1$ outgoing edges. In this case, we prove that the shifted states with
the monotone profiles in the $N-1$ outgoing edges are spectrally stable,
whereas the shifted states with non-monotone profiles in the $N-1$ outgoing edges
are spectrally unstable, the two families intersect at the nonlinearly unstable half-soliton state.
In spite of the spectral stability of the shifted states with the monotone profiles, we give a simple
argument that the shifted states are nonlinearly unstable
due to perturbations that break homogenization of the NLS equation.

Our conjecture is that the central peak of the shifted state
moves due to symmetry-breaking perturbations from the only incoming edge
towards the vertex, breaks into $(N-1)$ peaks in the outgoing edges, the latter peaks become unstable
due to spectral instability of the shifted states with the nonmonotone profiles.
This conjecture is based on the comparison of the energy of the central peak
in the incoming and outgoing edges at fixed mass. Due to this variational interpretation,
spectral and nonlinear instabilities of the shifted states are likely
to lead to a formation of a solitary wave escaping to infinity along one of the $(N-1)$ outgoing edges of the star graph.
This dynamical picture appears to be in a complete agreement with the nonexistence results in \cite{AdamiJFA}
developed for the ground state of energy at fixed mass in the context of the star graphs
with the classical Kirchhoff boundary conditions.

The paper is organized as follows. Section 2 gives a mathematical setup of the NLS equation on the star graph
under the generalized Kirchhoff boundary conditions. Section 3
introduces the family of shifted states and discusses their properties. Main results on spectral stability
of the shifted states are formulated in Section 4. The main results are proven in Section 5 with the count
of the Morse index for the shifted states. Section 6 discusses homogenization of the star graph with the
NLS equation on the infinite line. Variational interpretation of our results
and our conjectures are described in Section 7.

\section{The NLS equation on the star graph}

Let $\Gamma$ be a star graph, which consists of $N$ half-lines connected at a common vertex.
The vertex is chosen as the origin and each edge of the star graph is parameterized
by $\mathbb{R}^+$. The Hilbert space on the graph $\Gamma$ is given by
$$
L^2(\Gamma) = \oplus_{j=1}^N L^2(\mathbb{R}^+).
$$
Elements in $L^2(\Gamma)$ are represented in the componentwise sense as vectors
$\Psi = (\psi_1, \psi_2, \dots, \psi_N)^T$
of $L^2(\mathbb{R}^+)$-functions with each component corresponding to one edge.
The inner product and squared norm of such $L^2(\Gamma)$-functions are given by
$$
\langle \Psi, \Phi \rangle_{L^2(\Gamma)} :=
\sum_{j=1}^N \int_{\mathbb{R}^+} \psi_j(x) \overline{\phi_j(x)} dx,
\quad \| \Psi \|^2_{L^2(\Gamma)} :=
\sum_{j=1}^N \| \psi_j\|^2_{L^2(\mathbb{R}^+)}.
$$
Similarly, we define the $L^2$-based Sobolev spaces on the graph $\Gamma$
$$
H^k(\Gamma) = \oplus_{j=1}^N H^k(\mathbb{R}^+), \quad k \in \mathbb{N}
$$
and equip them with suitable boundary conditions at the vertex.

For $k = 1$, we set generalized continuity boundary conditions as follows:
\begin{equation}
\label{H1}
H_\Gamma^1 := \{ \Psi \in H^1(\Gamma): \quad \alpha_1^{1/p} \psi_1(0) = \alpha_2^{1/p} \psi_2(0) = \dots = \alpha_N^{1/p} \psi_N(0)\},
\end{equation}
where $\alpha_1$, $\alpha_2$, $\dots$, $\alpha_N$ are positive coefficients.
These coefficients arise naturally when the one-dimensional star graph is obtained as a limit
of a narrow two-dimensional waveguide with different values of the thickness parameters
that go to zero at the same rate \cite{EP2009,EP2013,Post}.

For $k = 2$, we set generalized Kirchhoff boundary conditions as follows:
\begin{equation}
\label{H2}
H_\Gamma^2 := \left\{ \Psi \in H^2(\Gamma) \cap H^1_{\Gamma} : \quad
\sum_{j=1}^N  \frac{1}{\alpha_j^{1/p}} \psi_j'(0) = 0 \right\},
\end{equation}
where the prime stands for one-sided derivatives in $x$. The reason why the derivatives
depend reciprocally on the positive coefficients $\alpha_1$, $\alpha_2$, $\dots$,
$\alpha_N$ is due to the requirement on the existence of a self-adjoint extension
of the Laplacian operator in $L^2(\Gamma)$, as in the following lemma.

\begin{lemma}
\label{lemma-Laplacian}
There exists a self-adjoint extension of the Laplacian operator
$$
\Delta: H^2_\Gamma \subset L^2(\Gamma) \to L^2(\Gamma).
$$
\end{lemma}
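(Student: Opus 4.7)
The plan is to verify that $\Delta$ on the domain $H^2_\Gamma$ is symmetric via a direct integration by parts, and then to identify the boundary conditions (\ref{H1})--(\ref{H2}) as cutting out a Lagrangian subspace of the boundary-value space $\mathbb{C}^{2N}$. Self-adjointness then follows from the standard extension theory for Laplacians on metric graphs, without appealing to the less explicit deficiency-index calculation alluded to in the introduction.

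\paragraph{Step 1: Symmetry.}
For $\Psi, \Phi \in H^2_\Gamma$, integration by parts on each half-line $\mathbb{R}^+$ yields
\begin{equation*}
\langle \Delta \Psi, \Phi \rangle_{L^2(\Gamma)} - \langle \Psi, \Delta \Phi \rangle_{L^2(\Gamma)}
= \sum_{j=1}^N \bigl[\psi_j(0)\,\overline{\phi_j'(0)} - \psi_j'(0)\,\overline{\phi_j(0)}\bigr],
\end{equation*}
with no contribution at $+\infty$ because $\psi_j, \psi_j', \phi_j, \phi_j'$ decay by membership in $H^2(\mathbb{R}^+)$. By (\ref{H1}) applied to $\Psi$, write $\psi_j(0) = c_\Psi \alpha_j^{-1/p}$ for a common constant $c_\Psi \in \mathbb{C}$; then
\begin{equation*}
\sum_{j=1}^N \psi_j(0)\,\overline{\phi_j'(0)} = c_\Psi \sum_{j=1}^N \alpha_j^{-1/p}\,\overline{\phi_j'(0)} = 0
\end{equation*}
by (\ref{H2}) applied to $\Phi$, and the second sum vanishes analogously. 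Hence $\Delta$ with domain $H^2_\Gamma$ is symmetric.

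\paragraph{Step 2: Lagrangian subspace.}
Identify the boundary-value space with $\mathbb{C}^{2N}$ via $\Psi \mapsto (\Psi(0), \Psi'(0))$ and equip it with the symplectic form $\omega\bigl((u,u'),(v,v')\bigr) := \sum_{j=1}^N (u_j\,\overline{v_j'} - u_j'\,\overline{v_j})$. The conditions (\ref{H1}) and (\ref{H2}) impose $N-1$ and $1$ independent linear equations respectively (all $\alpha_j > 0$, and (\ref{H2}) involves only derivatives), so they define a subspace $\mathcal{L} \subset \mathbb{C}^{2N}$ with $\dim \mathcal{L} = 2N - N = N$. The computation of Step 1 is precisely the statement $\omega|_{\mathcal{L} \times \mathcal{L}} = 0$, so $\mathcal{L}$ is isotropic; since $\dim \mathcal{L} = N = \tfrac{1}{2}\dim \mathbb{C}^{2N}$, it is Lagrangian. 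By the standard theory for Laplacians on metric graphs (see, e.g., \cite{Kuchment}), every Lagrangian subspace of boundary data extends the minimal operator on $\{\Psi \in H^2(\Gamma) : \psi_j(0) = \psi_j'(0) = 0\ \forall j\}$ to a self-adjoint operator, giving the lemma.

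\paragraph{Main obstacle.}
The nontrivial content is the dimension count together with the reciprocal weighting in (\ref{H2}): the specific pairing of the weight $\alpha_j^{1/p}$ in (\ref{H1}) with its inverse $\alpha_j^{-1/p}$ in (\ref{H2}) is what makes the two sums in Step 1 cancel against each other, so that the subspace $\mathcal{L}$ is isotropic. A different choice of weights in (\ref{H2}) would fail to be Lagrangian and would not correspond to a self-adjoint extension. A more pedestrian alternative, avoiding symplectic language altogether, is to compute $\operatorname{Dom}(\Delta^*)$ by hand: testing against $\Psi$ supported away from the vertex forces $\Phi \in H^2(\Gamma)$, and testing against $\Psi \in H^2_\Gamma$ with arbitrary boundary values in $\mathcal{L}$ forces the boundary values of $\Phi$ to lie in the $\omega$-annihilator of $\mathcal{L}$, which by the Lagrangian property equals $\mathcal{L}$ itself.
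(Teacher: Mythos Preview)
Your proof is correct and follows essentially the same route as the paper: both compute the same boundary form via integration by parts, use the reciprocal weighting in (\ref{H1})--(\ref{H2}) to make it vanish, and then invoke the extension theory in \cite{Kuchment}. The only difference is that the paper cites Theorem~1.4.4 of \cite{Kuchment} immediately after the symmetry computation, whereas you unpack what that citation requires by explicitly verifying the dimension count and the Lagrangian property of the boundary subspace; this makes your argument slightly more self-contained but not substantively different.
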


\begin{proof}
If $\Psi \in H^2(\Gamma)$, then $\Psi(x), \Psi'(x) \to 0$ as $x \to \infty$ by Sobolev embedding theorem.
Therefore, for any $\Psi, \Phi \in H^2(\Gamma)$, integration by parts and boundary conditions
in (\ref{H1}) and (\ref{H2}) yield
\begin{eqnarray*}
\langle \Delta \Psi, \Phi \rangle_{L^2(\Gamma)} & = &
\langle \Psi, \Delta \Phi \rangle_{L^2(\Gamma)} + \sum_{j=1}^N \psi_j(0) \overline{\phi}'_j(0) - \sum_{j=1}^N \psi_j'(0) \overline{\phi}_j(0) \\
& = & \langle \Psi, \Delta \Phi \rangle_{L^2(\Gamma)}
+ \alpha_1^{1/p} \psi_1(0) \sum_{j=1}^N \alpha_j^{-1/p} \overline{\phi}'_j(0)
- \alpha_1^{1/p} \overline{\phi}_1(0) \sum_{j=1}^N \alpha_j^{-1/p} \psi_j'(0) \\
& = & \langle \Psi, \Delta \Phi \rangle_{L^2(\Gamma)}.
\end{eqnarray*}
By Theorem 1.4.4 in \cite{Kuchment}, the Laplacian operator $\Delta : L^2(\Gamma) \to L^2(\Gamma)$
with the domain $H^2_{\Gamma} \subset L^2(\Gamma)$ is extended to a self-adjoint operator.
\end{proof}

The nonlinear Schr\"{o}dinger (NLS) equation is posed on the star graph $\Gamma$
with the power nonlinearity:
\begin{equation} \label{eq1}
i \frac{\partial \Psi}{\partial t} = - \Delta \Psi - (p+1) \alpha^2 |\Psi|^{2p} \Psi, \quad x \in \Gamma, \quad t \in \mathbb{R},
\end{equation}
where $\Psi = \Psi(t,x) = (\psi_1,\psi_2,\dots,\psi_N)^T \in \mathbb{C}^N$, 
$\Delta : L^2(\Gamma) \to L^2(\Gamma)$ is the Laplacian operator
in Lemma \ref{lemma-Laplacian}, $\alpha \in L^{\infty}(\Gamma)$ is a piecewise constant function
with the coefficients $(\alpha_1, \alpha_2, \dots, \alpha_N) \in \mathbb{R}^N_+$ defined on the edges of $\Gamma$,
and the nonlinear term $\alpha^2 |\Psi|^{2p} \Psi$ is interpreted as a symbol for
$$
(\alpha_1^2|\psi_1|^{2p}\psi_1, \alpha_2^2 |\psi_2|^{2p}\psi_2, \dots, \alpha_N^2 |\psi_N|^{2p} \psi_N)^T.
$$
The constant coefficients $(\alpha_1, \alpha_2, \dots, \alpha_N)$ are the same
as in the boundary conditions (\ref{H1}) and (\ref{H2}).

The NLS equation (\ref{eq1}) is invariant under the gauge transformation $\Psi \mapsto e^{i\theta} \Psi$
and under the time translation $\Psi(t,x) \mapsto \Psi(t+t_0,x)$ with $\theta \in \mathbb{R}$ and $t_0 \in \mathbb{R}$.
The following lemma summarizes relevant results on the local-wellposedness of
the Cauchy problem and on the conservation of the mass and energy functionals.

\begin{lemma}
\label{lemma-wellposedness}
For every $p > 0$ and every $\Psi(0) \in H^1_{\Gamma}$, there exists $t_0 > 0$ and a local solution
\begin{equation}
\label{solution-in-H1}
\Psi(t) \in C((-t_0,t_0),H^1_{\Gamma}) \cap C^1((-t_0,t_0),H^{-1}(\Gamma))
\end{equation}
to the Cauchy problem associated with the NLS equation (\ref{eq1}) such that the mass
\begin{equation}
\label{mass}
Q(\Psi) := \| \Psi \|_{L^2(\Gamma)}^2,
\end{equation}
and the energy
\begin{equation}
\label{energy}
E(\Psi) := \| \Psi' \|_{L^2(\Gamma)}^2 - \| \alpha^{\frac{1}{p+1}} \Psi \|_{L^{2p+2}(\Gamma)}^{2p+2}
\end{equation}
are constant in $t \in (-t_0,t_0)$.
\end{lemma}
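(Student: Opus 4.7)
The plan is to follow the standard Cazenave/Kato abstract semilinear scheme adapted to the metric graph setting. Since Lemma~\ref{lemma-Laplacian} establishes that $-\Delta$ is self-adjoint on $H^2_\Gamma \subset L^2(\Gamma)$, Stone's theorem gives a unitary group $\{e^{i t \Delta}\}_{t \in \mathbb{R}}$ on $L^2(\Gamma)$. Because the associated closed quadratic form is defined on $H^1_\Gamma$ (the generalized continuity condition in (\ref{H1}) is preserved under the form domain), $e^{it\Delta}$ restricts to a strongly continuous group of isometries on $H^1_\Gamma$ equipped with the graph norm. This provides the linear propagator needed for Duhamel's formulation
\[
\Psi(t) = e^{i t \Delta} \Psi(0) + i (p+1) \int_0^t e^{i(t-s)\Delta} \bigl[ \alpha^2 |\Psi(s)|^{2p} \Psi(s) \bigr] \, ds.
\]

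Next, I would establish that the nonlinearity $F(\Psi) := \alpha^2 |\Psi|^{2p} \Psi$ is locally Lipschitz on $H^1_\Gamma$. Since $H^1(\mathbb{R}^+) \hookrightarrow L^\infty(\mathbb{R}^+) \cap C_b(\mathbb{R}^+)$, applying the embedding componentwise and using that $\alpha \in L^\infty(\Gamma)$, one obtains $\|F(\Psi)\|_{H^1(\Gamma)} \leq C(\|\Psi\|_{H^1_\Gamma}) \|\Psi\|_{H^1_\Gamma}$ and an analogous local Lipschitz bound. The generalized continuity condition in (\ref{H1}) is compatible with $F$, since if $\alpha_j^{1/p}\psi_j(0)$ is the same for all $j$, then so is $\alpha_j^{1/p}\cdot \alpha_j^2 |\psi_j(0)|^{2p}\psi_j(0) = \alpha_j^{1/p + 2}|\psi_j(0)|^{2p}\psi_j(0)$; rewriting this as $(\alpha_j^{1/p}\psi_j(0))^{2p+1} / \alpha_j^{1/p}$ would need care, so I would instead note that $F$ acts as a componentwise pointwise multiplication and preserves $H^1_\Gamma$ because the boundary trace relations are preserved under the multiplication. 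A standard Banach fixed-point argument in $C([-t_0,t_0], H^1_\Gamma)$ then yields a local solution, and the equation itself gives $\partial_t \Psi \in C((-t_0,t_0), H^{-1}(\Gamma))$ since $\Delta \Psi \in H^{-1}(\Gamma)$ and $F(\Psi) \in H^1(\Gamma) \subset H^{-1}(\Gamma)$.

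For the conservation laws, I would argue first for smooth initial data in $H^2_\Gamma$ and then pass to the limit using continuous dependence. For $\Psi(0) \in H^2_\Gamma$ the solution lies in $C((-t_0,t_0), H^2_\Gamma) \cap C^1((-t_0,t_0), L^2(\Gamma))$ (bootstrapping from the $H^1_\Gamma$ theory using that $F$ is also locally Lipschitz on $H^2_\Gamma$). Pairing (\ref{eq1}) with $\overline{\Psi}$ in $L^2(\Gamma)$, using self-adjointness of $\Delta$ from Lemma~\ref{lemma-Laplacian} and the fact that $\alpha^2|\Psi|^{2p}|\Psi|^2$ is real, the imaginary part yields $\tfrac{d}{dt}Q(\Psi) = 0$. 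Pairing instead with $\overline{\partial_t \Psi}$ and taking the real part gives $\tfrac{d}{dt}E(\Psi) = 0$; here the boundary terms that appear when integrating $\langle -\Delta \Psi, \partial_t \Psi \rangle$ by parts cancel exactly by (\ref{H1})--(\ref{H2}) in the same manner as in the proof of Lemma~\ref{lemma-Laplacian}. The conservation laws then extend to all $\Psi(0) \in H^1_\Gamma$ by the density of $H^2_\Gamma$ in $H^1_\Gamma$ and continuity of $Q$ and $E$ on $H^1_\Gamma$.

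The main obstacle I expect is the verification that the boundary conditions (\ref{H1})--(\ref{H2}) interact well with the nonlinearity and with the semigroup: one must confirm that $F$ maps $H^1_\Gamma$ into itself (not just $H^1(\Gamma)$) and that $e^{it\Delta}$ preserves $H^1_\Gamma$. Both reduce to elementary algebraic checks given (\ref{H1}) and to the fact that the form domain of the self-adjoint Laplacian from Lemma~\ref{lemma-Laplacian} is precisely $H^1_\Gamma$. Once these points are in place, the rest is routine semilinear theory.
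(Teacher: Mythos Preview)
Your overall strategy matches the paper's: Duhamel formulation via the unitary group generated by the self-adjoint Laplacian of Lemma~\ref{lemma-Laplacian}, a fixed-point argument in $C([-t_0,t_0],H^1_\Gamma)$ using the Sobolev embedding $H^1(\mathbb{R}^+)\hookrightarrow L^\infty(\mathbb{R}^+)$, and conservation laws first for smooth data and then by density. Your check that the nonlinearity preserves the continuity condition in $H^1_\Gamma$ is correct and is exactly the algebraic computation one needs.

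There is, however, a genuine gap in your energy-conservation step. You propose to work with $\Psi\in C((-t_0,t_0),H^2_\Gamma)\cap C^1((-t_0,t_0),L^2(\Gamma))$ and pair the equation with $\overline{\partial_t\Psi}$. But at this regularity $\partial_t\Psi$ is only in $L^2(\Gamma)$; it has no trace at the vertex, so the boundary terms you say ``cancel exactly by (\ref{H1})--(\ref{H2})'' cannot even be written down, and the identity $\mathrm{Re}\,\langle -\Delta\Psi,\partial_t\Psi\rangle=\tfrac12\tfrac{d}{dt}\|\Psi'\|^2$ is not justified. The paper handles this by introducing the space $H^3_\Gamma$ in (\ref{H3}), with the additional boundary condition $\alpha_1^{1/p}\psi_1''(0)=\cdots=\alpha_N^{1/p}\psi_N''(0)$ chosen precisely so that the NLS flow preserves it; for $\Psi(0)\in H^3_\Gamma$ one gets $\partial_t\Psi\in H^1_\Gamma$, the boundary traces are legitimate, and the computation goes through. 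A related point you gloss over is that the bootstrap to $H^2_\Gamma$ (and to $H^3_\Gamma$) requires the map $u\mapsto|u|^{2p}u$ to be sufficiently smooth, which forces restrictions on $p$ (the paper takes $p>1/2$ for the $H^2_\Gamma$ step and $p\ge 1$ for the $H^3_\Gamma$ step, deferring the full range $p>0$ to the approximation techniques in \cite{Caz}). Your sketch should acknowledge both the need for the extra derivative and these constraints on $p$.
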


\begin{proof}
Local well-posedness of the NLS equation (\ref{eq1}) in $H^1_{\Gamma}$ is proved by using a standard contraction method
thanks to the isometry of the semi-group $e^{it \Delta}$ in $H^1_{\Gamma}$ and the Sobolev embedding of $H^1_{\Gamma}$ into $L^{\infty}(\Gamma)$.

Let us prove the mass and energy conservation under simplifying assumptions $p > 1/2$ and $p \geq 1$ respectively.
If $p > 1/2$ and $\Psi(0) \in H^2_{\Gamma}$, it follows from the contraction method that
there exists $t_0 > 0$ and a local strong solution
\begin{eqnarray}
\label{solution-in-H2}
\Psi(t) \in C((-t_0,t_0),H_\Gamma^2) \cap C^1((-t_0,t_0), L^2(\Gamma))
\end{eqnarray}
to the NLS equation (\ref{eq1}). Applying time derivative to $Q(\Psi)$ and using the NLS equation
(\ref{eq1}) yield the mass balance equation:
\begin{eqnarray*}
\frac{d}{dt} Q(\Psi) & = & -i \langle - \Delta \Psi - (p+1) \alpha^2 |\Psi|^{2p} \Psi, \Psi \rangle_{L^2(\Gamma)}
+ i \langle \Psi, - \Delta \Psi - (p+1) \alpha^2 |\Psi|^{2p} \Psi \rangle_{L^2(\Gamma)} \\
& = & i \langle \Delta \Psi, \Psi \rangle_{L^2(\Gamma)}
- i \langle \Psi, \Delta \Psi \rangle_{L^2(\Gamma)} = 0,
\end{eqnarray*}
where the last equality is obtained by Lemma \ref{lemma-Laplacian}.
Thus, the mass conservation of (\ref{mass}) is proven for $\Psi(0) \in H^2_{\Gamma}$.

If $p > 1/2$ and $\Psi(0) \in H^1_{\Gamma}$ but $\Psi(0) \notin H^2_{\Gamma}$,
then in order to prove the mass conservation of (\ref{mass}),
we define an approximating sequence $\{ \Psi^{(n)}(0) \}_{n \in \mathbb{N}}$ in $H^2_{\Gamma}$
such that $\Psi^{(n)}(0) \to \Psi(0)$ in $H^1_{\Gamma}$ as $n \to \infty$.
For each $\Psi^{(n)}(0) \in H^2_{\Gamma}$, there exists a local strong solution
$\Psi^{(n)}(t)$ given by (\ref{solution-in-H2}) for $t \in (-t_0^{(n)},t_0^{(n)})$. By Gronwall's inequality,
there exists a positive constant $K$ which only depends on the $H^1(\Gamma)$ norm
of the local solution $\Psi^{(n)}(t)$ such that
$$
\| \Psi^{(n) \prime \prime}(t) \|_{L^2(\Gamma)} \leq K \| \Psi^{(n) \prime \prime}(0) \|_{L^2(\Gamma)}, \quad
t \in (-t_0^{(n)},t_0^{(n)}),
$$
hence, the local existence time $t_0^{(n)}$ is determined by the $H^1(\Gamma)$ norm
of the initial data $\Psi^{(n)}(0)$. Due to the convergence $\Psi^{(n)}(0) \to \Psi(0)$ in $H^1_{\Gamma}$,
this implies that there is $t_0 > 0$ that depends on the $H^1(\Gamma)$
norm of $\Psi(0)$ such that $t_0^{(n)} \geq t_0$ for every $n \in \mathbb{N}$. Moreover,
$\Psi^{(n)}(t) \to \Psi(t)$ in $H^1_{\Gamma}$ as $n \to \infty$ for every $t \in (-t_0,t_0)$.
Since $Q(\Psi^{(n)}(t)) = Q(\Psi^{(n)}(0))$ for every $t \in (-t_0,t_0)$,
the limit $n \to \infty$ and the strong convergence in $H^1_{\Gamma}$ implies that
$Q(\Psi(t)) = Q(\Psi(0))$ for every $t \in (-t_0,t_0)$.

In order to prove the energy conservation, let us define the space $H^3_{\Gamma}$ compatible with the NLS flow:
\begin{equation}
\label{H3}
H_{\Gamma}^3 := \left\{ \Psi \in H^3(\Gamma) \cap H^2_\Gamma : \quad
\alpha_1^{1/p} \psi_1''(0) = \alpha_2^{1/p} \psi_2''(0) = \dots = \alpha_N^{1/p} \psi_N''(0) \right\}.
\end{equation}
If $p \geq 1$ and $\Psi(0) \in H^3_{\Gamma}$, it follows from the contraction method that
there exists $t_0 > 0$ and a local strong solution if $\Psi(0) \in H^3_{\Gamma}$
\begin{eqnarray}
\label{solution-in-H3}
\Psi(t) \in C((-t_0,t_0),H_\Gamma^3) \cap C^1((-t_0,t_0), H^1_{\Gamma})
\end{eqnarray}
to the NLS equation (\ref{eq1}). Applying time derivative to $E(\Psi)$ and using the NLS equation
(\ref{eq1}) yield the energy balance equation:
\begin{eqnarray*}
\frac{d}{dt} E(\Psi) & = &
i \langle \Psi''', \Psi' \rangle_{L^2(\Gamma)} - i \langle \Psi', \Psi''' \rangle_{L^2(\Gamma)}\\
& \phantom{t} & + i (p+1) \langle \alpha^2 (|\Psi|^{2p})' \Psi, \Psi' \rangle_{L^2(\Gamma)}
- i(p+1) \langle \Psi', \alpha^2 (|\Psi|^{2p})' \Psi \rangle_{L^2(\Gamma)}\\
& \phantom{t} &
+ i(p+1) \langle \Psi^{p+1}, \alpha^2 \Psi^p \Delta \Psi\rangle_{L^2(\Gamma)}
- i(p+1) \langle \alpha^2 \Psi^p \Delta \Psi, \Psi^{p+1} \rangle_{L^2(\Gamma)} \\
& = & i \sum_{j=1}^N \psi_j'(0) \left[ \overline{\psi}_j''(0) + (p+1) \alpha_j^2 |\psi_j(0)|^{2p}
\overline{\psi}_j(0)  \right] \\
& \phantom{t} & - i \sum_{j=1}^N \overline{\psi}_j'(0) \left[  \psi_j''(0) + (p+1) \alpha_j^2 |\psi_j(0)|^{2p} \psi_j(0) \right],
\end{eqnarray*}
where the decay of $\Psi(x)$, $\Psi'(x)$, and $\Psi''(x)$ to zero at infinity has been used
for the solution in $H^3_{\Gamma}$. Due to the boundary conditions in (\ref{H1}), (\ref{H2}), and (\ref{H3}),
we obtain $\frac{d}{dt} E(\Psi) = 0$, that is, the energy conservation of (\ref{energy})
is proven for $\Psi(0) \in H^3_{\Gamma}$. The proof for $p \geq 1$ and $\Psi(0) \in H^1_{\Gamma}$ but $\Psi(0) \notin H^3_{\Gamma}$
is achieved by using an approximating sequence similarly to the argument above.

Finally, the proof can be extended to the local solution (\ref{solution-in-H1}) for
all values of $p > 0$ by using other approximation techniques,
see, e.g., Theorems 3.3.1, 3.3.5, and 3.39 in \cite{Caz} or the proof of Proposition 2.2 in \cite{AdamiJDE1}
for NLS on $\Gamma$ with $\alpha = 1$.
\end{proof}

Global existence in the NLS flow only holds in the subcritical case $p \in (0,2)$.
In what follows, the scope of this work will be mainly developed in the subcritical case.

\begin{lemma}
\label{lemma-global}
For every $p\in (0,2)$, the local solution (\ref{solution-in-H1}) in Lemma \ref{lemma-wellposedness}
is extended globally with $t_0 \to \infty$.
\end{lemma}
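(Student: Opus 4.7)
The plan is to upgrade the local solution of Lemma \ref{lemma-wellposedness} to a global one by deriving an a priori bound on $\|\Psi(t)\|_{H^1_\Gamma}$ that depends only on the conserved quantities $Q(\Psi(0))$ and $E(\Psi(0))$. Once such a bound is in place, the standard blow-up alternative (the local existence time depends only on the $H^1_\Gamma$-norm of the initial datum, as already exploited in the Gronwall step of the previous proof) will let us iterate the local theory to obtain $t_0 = \infty$.

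For the a priori bound, I would first apply the Gagliardo--Nirenberg inequality on each half-line $\mathbb{R}^+$: there is a constant $C>0$ such that for any $\psi \in H^1(\mathbb{R}^+)$,
$$
\| \psi \|_{L^{2p+2}(\mathbb{R}^+)}^{2p+2} \leq C \| \psi' \|_{L^2(\mathbb{R}^+)}^{p} \| \psi \|_{L^2(\mathbb{R}^+)}^{p+2}.
$$
Summing over $j = 1, \dots, N$, using $\alpha \in L^{\infty}(\Gamma)$, and applying the elementary inequality $\sum_j a_j^p b_j \leq (\sum_j a_j^2)^{p/2}(\sum_j b_j^{2/(2-p)})^{(2-p)/2}$ (or simply H\"older since $N$ is finite), I obtain a graph-level estimate
$$
\| \alpha^{\frac{1}{p+1}} \Psi \|_{L^{2p+2}(\Gamma)}^{2p+2} \leq C_{\alpha,N} \| \Psi' \|_{L^2(\Gamma)}^{p} \| \Psi \|_{L^2(\Gamma)}^{p+2}.
$$

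Next I would use conservation of $E$ and $Q$ to rewrite the kinetic energy as
$$
\| \Psi'(t) \|_{L^2(\Gamma)}^{2} = E(\Psi(0)) + \| \alpha^{\frac{1}{p+1}} \Psi(t) \|_{L^{2p+2}(\Gamma)}^{2p+2} \leq E(\Psi(0)) + C_{\alpha,N} Q(\Psi(0))^{(p+2)/2} \| \Psi'(t) \|_{L^2(\Gamma)}^{p}.
$$
Since $p \in (0,2)$, the right-hand side grows strictly slower than the left-hand side as $\|\Psi'\|_{L^2(\Gamma)} \to \infty$; by Young's inequality this yields a uniform bound $\| \Psi'(t) \|_{L^2(\Gamma)} \leq M$ with $M$ depending only on $E(\Psi(0))$ and $Q(\Psi(0))$. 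Together with mass conservation this gives a global bound on $\| \Psi(t) \|_{H^1_\Gamma}$.

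The main (and essentially only) subtlety is checking that the half-line Gagliardo--Nirenberg inequality transfers correctly to the graph despite the generalized continuity condition (\ref{H1}): this is harmless because the estimate is componentwise and uses no information about the vertex values. With that in hand, the continuation argument is routine, and one may alternatively invoke Theorem 3.4.1 in \cite{Caz} applied edgewise, exactly as in the final paragraph of Lemma \ref{lemma-wellposedness}. This completes the extension to all $t \in \mathbb{R}$.
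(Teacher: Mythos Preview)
Your proposal is correct and follows essentially the same approach as the paper: the paper's proof is a two-line sketch that simply invokes energy conservation together with the graph Gagliardo--Nirenberg inequality $\| \alpha^{\frac{1}{p+1}} \Psi \|_{L^{2p+2}(\Gamma)}^{2p+2} \leq C_{p,\alpha} \| \Psi' \|_{L^2(\Gamma)}^{p} \| \Psi \|_{L^2(\Gamma)}^{p+2}$, and you have supplied the details (componentwise derivation of that inequality, the algebraic step using $p<2$ and Young's inequality, and the continuation argument).
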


\begin{proof}
This follows by the energy conservation and the Gagliardo-Nirenberg inequality
$$
\| \alpha^{\frac{1}{p+1}} \Psi \|_{L^{2p+2}(\Gamma)}^{2p+2} \leq C_{p,\alpha} \| \Psi' \|_{L^2(\Gamma)}^{p} \| \Psi \|_{L^2(\Gamma)}^{p+2},
$$
for every $\alpha \in L^{\infty}(\Gamma)$, $\Psi \in H^1_{\Gamma}$, $p > 0$, where the constant $C_{p,\alpha} > 0$ depends on $p$ and
$\alpha$ but does not depend on $\Psi$.
\end{proof}

In this work, we assume that the coefficients $(\alpha_1,\alpha_2,\dots,\alpha_N)$ satisfy the following constraint:
\begin{equation}
\label{alpha_const}
\sum_{j=1}^K \frac{1}{\alpha_j^{2/p}} = \sum_{j = K+1}^N \frac{1}{\alpha_j^{2/p}},
\end{equation}
where $K$ edges represent incoming bonds whereas the remaining $N-K$ edges
represent outgoing bonds, clearly $K \neq 0$ and $K \neq N$.
In particular, we consider two examples of the general star graph $\Gamma$:

\begin{enumerate}
\item[(i)] $\alpha_j=1$ for all $j$ and $N$ is even. The boundary conditions in $H_\Gamma^1$ give the continuity
at the vertex and the boundary conditions in $H_\Gamma^2$ are referred to as the classical Kirchhoff conditions.
Constraint (\ref{alpha_const}) is satisfied with $K = N/2$.

\item[(ii)] $K = 1$ and $N \geq 3$. The graph $\Gamma$ consists of one incoming edge that splits into $N-1$ outgoing edges.
The constraint (\ref{alpha_const}) gives the reflectionless boundary conditions for the transmission
of a solitary wave across the single vertex, as in \cite{M1,M2,M3}.
\end{enumerate}

\begin{remark}
\label{remark-NLS}
If $N = 2$, then the constraint (\ref{alpha_const}) is only satisfied if $K = 1$ and $\alpha_1 = \alpha_2$.
In this case, the NLS equation (\ref{nls}) on the graph $\Gamma$ is completely equivalent to the homogeneous
NLS equation on the infinite line $\mathbb{R}$.
\end{remark}

The stationary states of the NLS equation on the star graph $\Gamma$ under the constraint (\ref{alpha_const}) include
families parameterized by the translational parameter, as in the following section.

\section{Stationary states on the star graph}

Stationary states of the NLS are given by the solutions of the form
$$
\Psi(t,x) = e^{i\omega t} \Phi_{\omega}(x),
$$
where $(\omega,\Phi_{\omega}) \in \mathbb{R} \times H^2_{\Gamma}$ is a real-valued solution of the stationary NLS equation,
\begin{equation}\label{eq2}
-\Delta \Phi_{\omega} - (p+1) \alpha^2 |\Phi_{\omega}|^{2p} \Phi_{\omega} = - \omega \Phi_{\omega}.
\end{equation}

No solution $\Phi_{\omega} \in H^2_{\Gamma}$ to the stationary NLS equation (\ref{eq2}) exist for $\omega \leq 0$
because $\sigma(-\Delta) \geq 0$ in $L^2(\Gamma)$ and $\Phi_{\omega}(x), \Phi_{\omega}'(x) \to 0$ as $x \to \infty$
if $\Phi_{\omega} \in H^2_{\Gamma}$ by Sobolev's embedding theorems. For $\omega > 0$, the scaling transformation
$\Phi_{\omega}(x) = \omega^{\frac{1}{2p}} \Phi(z)$ with $z = \omega^{\frac{1}{2}} x$
is employed to scale $\omega$ to unity. The normalized profile $\Phi \in H^2_{\Gamma}$ is now a solution
of the stationary NLS equation
\begin{equation}\label{eq3}
-\Delta \Phi + \Phi - (p+1)\alpha^2 |\Phi|^{2p} \Phi = 0.
\end{equation}

For every $N$ and $\alpha$, the stationary NLS equation (\ref{eq3}) has a particular solution
\begin{equation}
\label{half-soliton}
\Phi(x) = \left[ \begin{array}{c} \alpha_1^{-1/p} \\ \alpha_2^{-1/p} \\ \vdots \\ \alpha_N^{-1/p} \end{array} \right] \phi(x),
\quad \mbox{\rm with} \; \phi(x) = \sech^{\frac{1}{p}}(px).
\end{equation}
This solution is labeled in the previous paper \cite{KP} as the {\em half-soliton state}.
In this work, we are interested in the families of solitary waves parameterized by a translational
parameter, which are labeled as the {\em shifted states}. Such families exist
if $(\alpha_1,\alpha_2,\dots,\alpha_N)$ satisfy the constraint (\ref{alpha_const}).

The following lemma gives the existence of a family of shifted states
under the constraint (\ref{alpha_const}).

\begin{lemma}
\label{solutions-II}
For every $p > 0$ and every $(\alpha_1,\alpha_2,\dots,\alpha_N)$ satisfying the constraint (\ref{alpha_const}), there exists
a one-parameter family of solutions to the stationary NLS equation (\ref{eq3})
with any $p > 0$ given by $\Phi = (\phi_1,\dots,\phi_N)^T$ with components
\begin{equation}
\label{soliton-shifted-II}
\phi_j(x) = \begin{cases} \alpha_j^{-1/p} \phi(x+a), & j=1,\dots,K \\
\alpha_j^{-1/p} \phi(x-a), & j = K+1,\dots,N,
\end{cases}
\end{equation}
where $\phi(x) = \sech^{\frac{1}{p}}(px)$ and $a\in \mathbb{R}$ is arbitrary.
\end{lemma}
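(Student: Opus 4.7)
The plan is a direct verification: show that the componentwise ansatz (\ref{soliton-shifted-II}) (i) solves the stationary ODE on each half-line, and (ii) satisfies both boundary conditions encoded in $H^2_{\Gamma}$, with the constraint (\ref{alpha_const}) entering precisely at the derivative (Kirchhoff) condition.

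First I would recall that the normalized soliton $\phi(x) = \sech^{1/p}(px)$ is the standard solution on $\mathbb{R}$ of $-\phi''+\phi-(p+1)\phi^{2p+1}=0$, and is even. Plugging $\phi_j(x) = \alpha_j^{-1/p}\phi(x\pm a)$ into $-\phi_j''+\phi_j-(p+1)\alpha_j^2 \phi_j^{2p+1}$ on the $j$th edge, the factor $\alpha_j^{-1/p}$ factors out of the linear part while the nonlinear term contributes $(p+1)\alpha_j^2 \alpha_j^{-(2p+1)/p} = (p+1)\alpha_j^{-1/p}$; thus the ODE (\ref{eq3}) reduces to the line equation for $\phi$ on each edge and is satisfied identically.

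Next I would check the vertex conditions. For the generalized continuity in (\ref{H1}), evaluating $\alpha_j^{1/p}\phi_j(0)$ gives $\phi(a)$ for $j\le K$ and $\phi(-a)$ for $j>K$; these agree because $\phi$ is even, so all $N$ quantities equal $\phi(a)$, hence $\Phi\in H^1_\Gamma$. For the derivative condition in (\ref{H2}), the one-sided derivatives give $\phi_j'(0)=\alpha_j^{-1/p}\phi'(a)$ for $j\le K$ and $\phi_j'(0)=\alpha_j^{-1/p}\phi'(-a)$ for $j>K$. Summing with the reciprocal weights yields
\begin{equation*}
\sum_{j=1}^N \alpha_j^{-1/p}\phi_j'(0) = \phi'(a)\sum_{j=1}^K \alpha_j^{-2/p} + \phi'(-a)\sum_{j=K+1}^N \alpha_j^{-2/p},
\end{equation*}
and since $\phi$ is even (so $\phi'$ is odd), this equals $\phi'(a)\bigl[\sum_{j=1}^K \alpha_j^{-2/p} - \sum_{j=K+1}^N \alpha_j^{-2/p}\bigr]$, which vanishes precisely by (\ref{alpha_const}).

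There is essentially no hard step: all heavy lifting is done by the algebraic constraint (\ref{alpha_const}) combined with the parity of $\phi$. The only thing to be careful about is bookkeeping of the exponents $1/p$, $2/p$, and $(2p+1)/p$ in the nonlinear term so that the weight $\alpha_j^{-1/p}$ pulls out cleanly in the ODE on each edge. Finally, $\Phi\in H^2_\Gamma$ since each $\phi_j$ decays exponentially on $\mathbb{R}^+$, so the solution is genuinely in the domain of the self-adjoint Laplacian of Lemma \ref{lemma-Laplacian}. Since $a\in\mathbb{R}$ is arbitrary, this produces the claimed one-parameter family.
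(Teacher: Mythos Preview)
Your proof is correct. It takes a slightly different route from the paper's: you verify directly that the given ansatz satisfies both the edge ODEs and the vertex conditions, whereas the paper starts from the general decaying solution $\phi_j(x)=\alpha_j^{-1/p}\phi(x+a_j)$ with free parameters $(a_1,\dots,a_N)$, then imposes the boundary conditions to deduce $|a_1|=\cdots=|a_N|$ and arrive at the form (\ref{soliton-shifted-II}) via the choice (\ref{m_j}). For the lemma as stated (an existence claim with an explicit formula), your verification is the more economical argument; the paper's derivation buys a bit more, namely a classification of all decaying solutions of this scaled form, which feeds into the subsequent remark about other admissible $N$-tuples $(m_1,\dots,m_N)$ and into Lemma~\ref{solutions}.
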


\begin{proof}
A general solution to the stationary NLS equation (\ref{eq3}) decaying to zero at infinity
is given by $\Phi = (\phi_1,\dots,\phi_N)^T$ with components
$$
\phi_j(x) = \alpha_j^{-1/p} \phi(x+a_j), \quad 1 \leq j \leq N,
$$
where $(a_1,\dots,a_N) \in \mathbb{R}^N$ are arbitrary parameters.
The continuity condition in $H_\Gamma^2$ imply that $|a_1| = \dots = |a_N|$.
In other words, for every $j=1, \dots, N$, there exists $m_j \in \{0, 1\}$,
such that $a_j = (-1)^{m_j} |a|$ for some $a \in \mathbb{R}$.
The Kirchhoff condition in $H_\Gamma^2$ is equivalent to
\begin{equation}
\label{proof_cond}
\phi'(|a|) \sum_{j=1}^N \frac{(-1)^{m_j}}{\alpha_j^{2/p}} =0.
\end{equation}
If $a = 0$, the equation (\ref{proof_cond}) holds since $\phi'(0) = 0$ and this yields
the half-soliton state in the form (\ref{half-soliton}). If $a \neq 0$, then the equation (\ref{proof_cond}) holds
due to the constraint (\ref{alpha_const}) if
\begin{equation}
\label{m_j}
\mbox{\rm either} \;\; m_j = \begin{cases}
1 \quad \textrm{for} \quad 1\leq j \leq K \\
0 \quad \textrm{for} \quad K+1\leq j \leq N
\end{cases} \quad \mbox{\rm or} \quad
m_j = \begin{cases}
0 \quad \textrm{for} \quad 1\leq j \leq K \\
1 \quad \textrm{for} \quad K+1\leq j \leq N
\end{cases}
\end{equation}
In both cases, the shifted state appears in the form (\ref{soliton-shifted-II}) with either $a < 0$ or $a > 0$.
\end{proof}

\begin{remark}
\label{remark-a-0}
The half-soliton state (\ref{half-soliton}) corresponds to the shifted state of Lemma
\ref{solutions-II} with $a = 0$.
\end{remark}

\begin{remark}
Besides the two choices specified in the proof of Lemma \ref{solutions-II},
there might be other $N$-tuples $(m_1, m_2, \dots, m_N) \in \{0,1\}^N$
such that the bracket in (\ref{proof_cond}) becomes zero. Such $N$-tuples generate
new one-parameter family different from the one given by Lemma \ref{solutions-II}
under the same constraint (\ref{alpha_const}). For instance,
if $\alpha_j = 1$ for all $j$ and $K = N/2$, there exist $C_N$ different shifted states
given by Lemma \ref{solutions} below with $C_N$ computed in (\ref{C-N}).
\end{remark}

The following lemma gives a full classification of
families of shifted states in case (i) (see also Theorem 5 in \cite{AdamiJDE1}).

\begin{lemma}
\label{solutions}
For $\alpha = 1$ and for even $N$, there exists $C_N$ one-parameter
families of solutions to the stationary NLS equation (\ref{eq3}) with any $p > 0$, where
\begin{equation}
\label{C-N}
C_N = \frac{N!}{2 [(N/2)!]^2}.
\end{equation}
Each family is generated from the simplest state $\Phi = (\phi_1,\dots,\phi_N)^T$ with components
\begin{equation}
\label{soliton-shifted}
\phi_j(x) = \begin{cases} \phi(x+a), & j=1,\dots,\frac{N}{2} \\
\phi(x-a), & j=\frac{N}{2}+1,\dots,N
\end{cases}
\end{equation}
where $\phi(x) = \sech^{\frac{1}{p}}(px)$ and $a\in \mathbb{R}$ is arbitrary,
after rearrangements between $N/2$ edges with $+a$ shifts and $N/2$ edges with $-a$ shifts.
\end{lemma}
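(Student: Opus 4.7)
The plan is to follow the scheme of the proof of Lemma \ref{solutions-II} almost verbatim, specialized to $\alpha_j = 1$, and then to count equivalence classes of admissible sign patterns. First I would write any solution of (\ref{eq3}) decaying at infinity in the form $\phi_j(x) = \phi(x + a_j)$, $1 \leq j \leq N$, with $(a_1, \dots, a_N) \in \mathbb{R}^N$. The continuity condition in $H^1_\Gamma$ forces $\phi(a_1) = \phi(a_2) = \dots = \phi(a_N)$, and since $\phi$ is even and strictly monotone on $\mathbb{R}^+$ this yields $|a_1| = \dots = |a_N| =: |a|$; hence each $a_j = (-1)^{m_j} |a|$ for some $m_j \in \{0,1\}$.

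Next I would substitute into the generalized Kirchhoff condition in $H^2_\Gamma$ (with $\alpha_j = 1$), which reduces to
\begin{equation*}
\phi'(|a|) \sum_{j=1}^N (-1)^{m_j} = 0.
\end{equation*}
The case $a = 0$ gives the half-soliton state of (\ref{half-soliton}), while for $a \neq 0$ one has $\phi'(|a|) \neq 0$, so the admissible $N$-tuples $(m_1, \dots, m_N) \in \{0,1\}^N$ are exactly those with $\#\{j : m_j = 0\} = \#\{j : m_j = 1\} = N/2$. This requires $N$ to be even, and the total count of such $N$-tuples is the binomial coefficient $\binom{N}{N/2}$.

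Finally I would explain why this count overshoots the number of one-parameter families by a factor of two. Each admissible $N$-tuple $(m_1, \dots, m_N)$ generates a family parameterized by $a \in \mathbb{R}$ via $\phi_j(x) = \phi(x + (-1)^{m_j} |a|)$; the replacement $a \mapsto -a$ sends this family to the one indexed by $(1 - m_1, \dots, 1 - m_N)$. Since the parameter $a$ in (\ref{soliton-shifted}) ranges over all of $\mathbb{R}$, these two tuples index the same one-parameter family. Grouping the $\binom{N}{N/2}$ tuples into pairs $\{(m_j), (1 - m_j)\}$ (no tuple is self-paired because that would require $m_j = 1 - m_j$) yields exactly
\begin{equation*}
C_N = \frac{1}{2} \binom{N}{N/2} = \frac{N!}{2\,[(N/2)!]^2}
\end{equation*}
distinct one-parameter families, each obtained from (\ref{soliton-shifted}) by permuting which $N/2$ edges receive the $+a$ shift and which receive the $-a$ shift.

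The only subtle point, which I would state carefully, is the identification of families under $a \mapsto -a$: one needs to observe that the two admissible sign patterns in each pair are genuinely distinct elements of $\{0,1\}^N$ (since $N \geq 2$ rules out a fixed point of $m \mapsto 1 - m$), so the division by $2$ is exact and the formula for $C_N$ follows with no off-by-one correction.
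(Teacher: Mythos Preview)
Your proof is correct and follows essentially the same approach as the paper: write the general decaying solution as $\phi_j(x)=\phi(x+a_j)$, use continuity to force $|a_1|=\dots=|a_N|$, and then use the Kirchhoff condition to force an equal split of signs. Your version is in fact more complete than the paper's, which stops at the statement that exactly $N/2$ of the $a_j$ are positive and $N/2$ negative and never spells out the derivation of the factor $\tfrac12$ in $C_N$; your identification of the sign pattern $(m_j)$ with $(1-m_j)$ under $a\mapsto -a$ supplies precisely that missing step. One cosmetic remark: the parenthetical ``since $N\geq 2$ rules out a fixed point'' is unnecessary, as $m_j=1-m_j$ is impossible for any $N$, so the pairing is always free.
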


\begin{proof}
A general solution to the stationary NLS equation (\ref{eq3}) decaying to zero at infinity
is given by
$$
\Phi = (\phi(x+a_1),\dots,\phi(x+a_N))^T,
$$
where $\phi(x) = \sech^{\frac{1}{p}}(px)$, and $(a_1,\dots,a_N) \in \mathbb{R}^N$ are arbitrary parameters.
The continuity condition in $H_\Gamma^2$ imply that $|a_1| = \dots = |a_N|$.
The Kirchhoff condition in $H_\Gamma^2$ is equivalent to
$\phi(a) \sum_{j=1}^N \tanh(a_j) = 0$, which together with the continuity condition
implies that the set $(a_1,\dots,a_N)$ has exactly $\frac{N}{2}$ positive elements and
exactly $\frac{N}{2}$ negative elements.
\end{proof}

\begin{remark}
If $N = 2$, then $C_2 = 1$. The only branch of shifted states in Lemma \ref{solutions} corresponds to the NLS solitary wave
translated along an infinite line $\mathbb{R}$, see Remark \ref{remark-NLS}.
\end{remark}

\begin{remark}
If $N = 3$, then $C_3 = 3$. The three branches of shifted states in Lemma \ref{solutions}
correspond to the three possible NLS solitary waves translated along an infinite line $\mathbb{R}$,
which is defined by the union of either $(1,2)$ or $(1,3)$, or $(2,3)$ edges of the star graph $\Gamma$.
\end{remark}

\begin{figure}[htb]
\begin{center}
\includegraphics[height=5cm,width=7.cm]{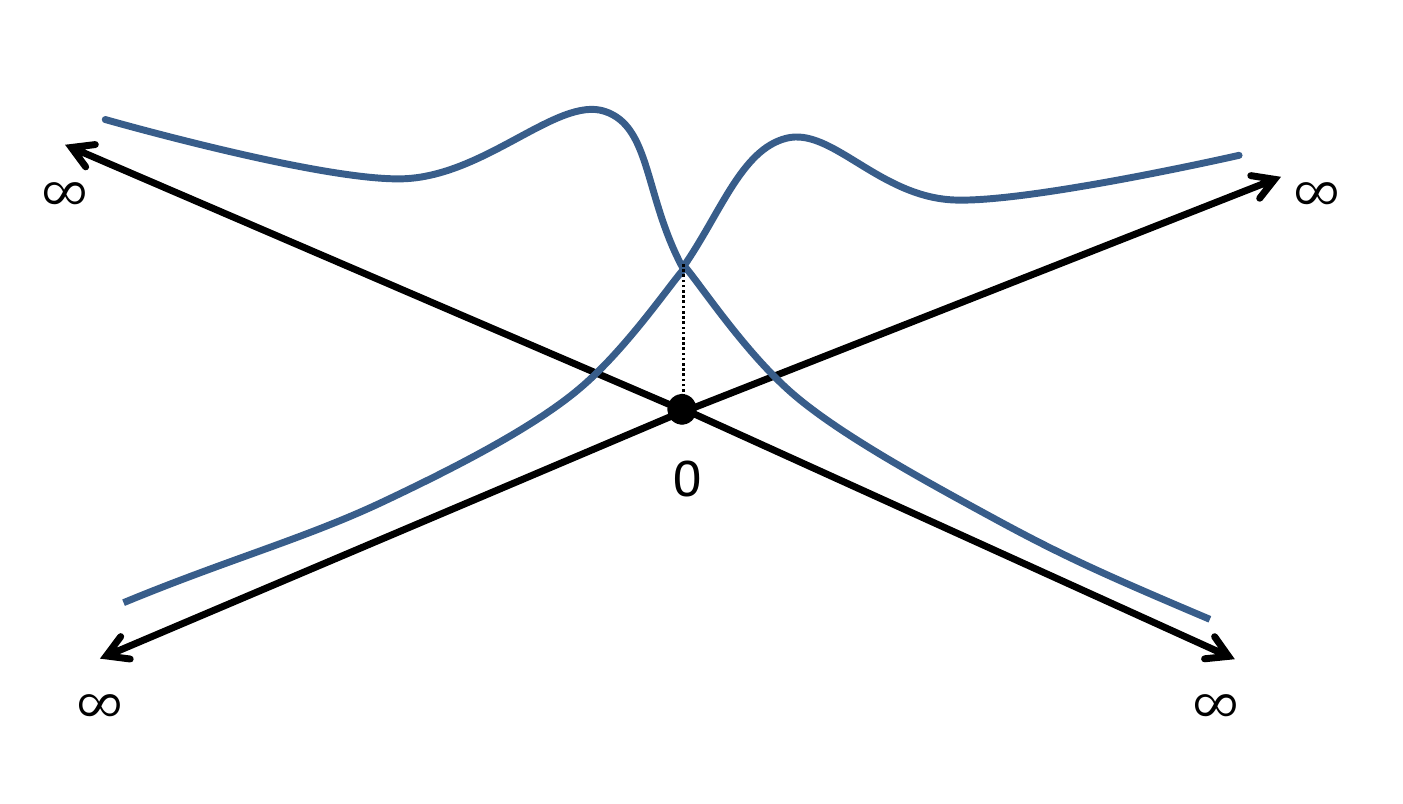}
\includegraphics[height=5.5cm,width=7.cm]{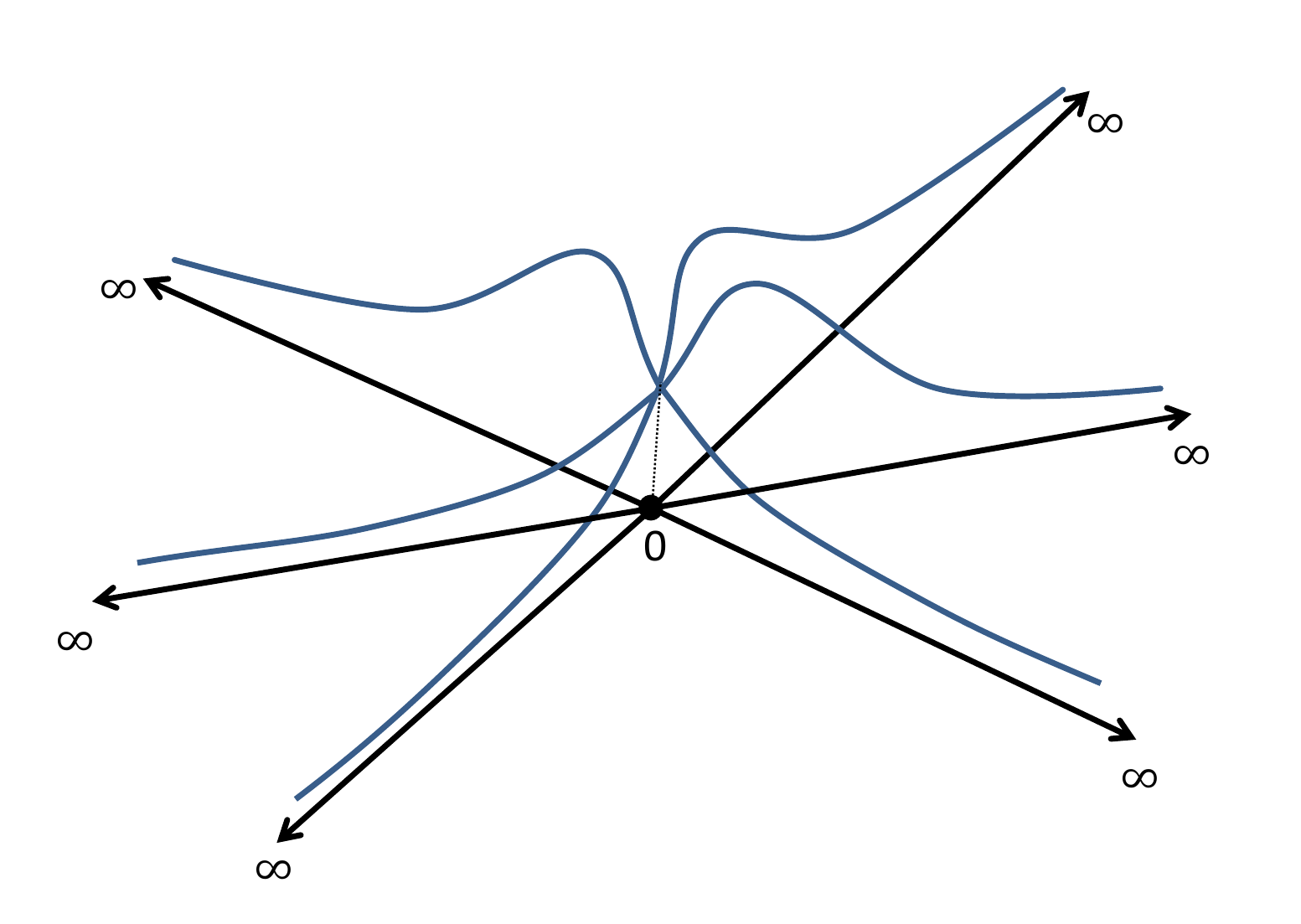}
\end{center}
\caption{Schematic representation of the shifted states (\ref{soliton-shifted}) with $a \neq 0$
for $N = 4$ (left) and $N = 6$ (right). The profile is monotonic in $N/2$ edges and
non-monotonic in the other $N/2$ edges.}
\label{fig-1}
\end{figure}

\begin{figure}[htb]
\begin{center}
\includegraphics[height=5.5cm,width=8.cm]{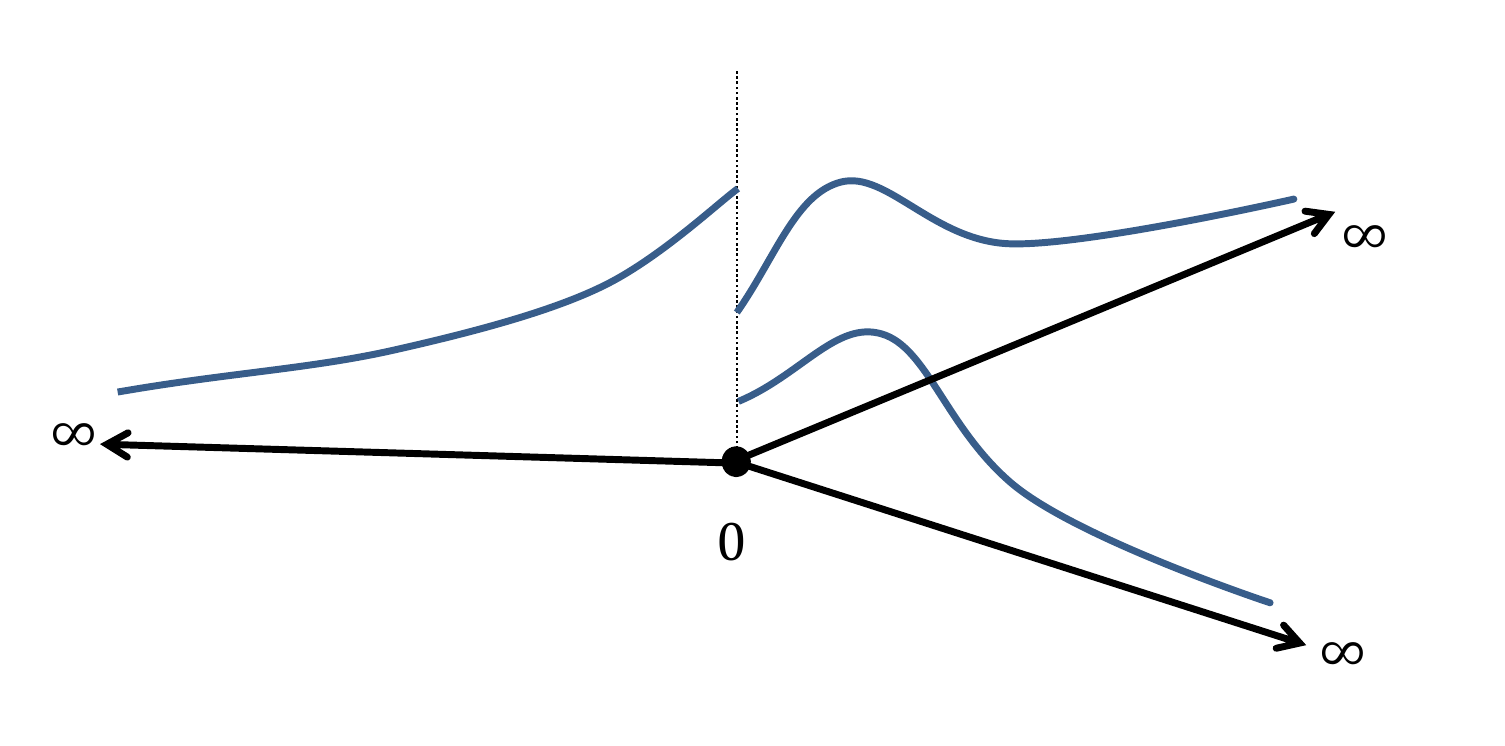}
\includegraphics[height=5.5cm,width=8.cm]{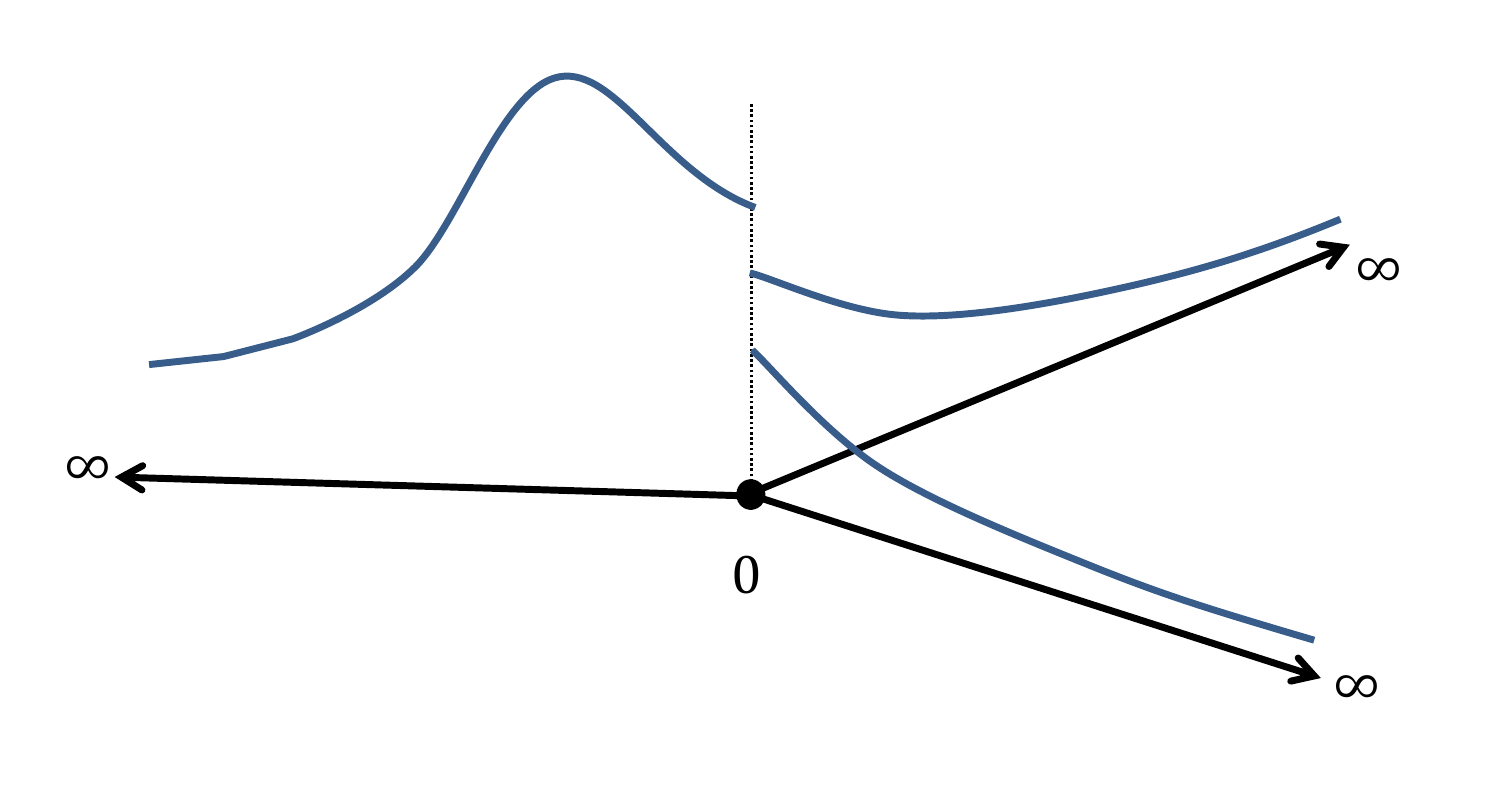}
\end{center}
\caption{Schematic representation of the shifted states (\ref{soliton-shifted-II}) with
$K = 1$, $N = 3$, and either $a > 0$ (left) or $a < 0$ (right). For $a > 0$,
the profile is  monotonic in $1$ edge and
non-monotonic in the other $2$ edges, whereas for $a < 0$, the situation is reversed.}
\label{fig-2}
\end{figure}

For graphical illustrations, we present some of the shifted states on Figure \ref{fig-1} and \ref{fig-2}.
Figure \ref{fig-1} shows shifted states in the case (i) with $N = 4$ (left) and $N = 6$ (right)
when the shifted states are given by Lemma \ref{solutions}.
If $a = 0$, the profile of $\Phi$ contains $N/2$ monotonic and $N/2$ non-monotonic tails
in different edges of the star graph $\Gamma$. Figures \ref{fig-2} shows shifted states in the case (ii)
with $N = 3$ when the shifted states are given by Lemma \ref{solutions-II} with $K = 1$.
If $a > 0$ (left), the profile of $\Phi$ contains $1$ monotonic and $2$ non-monotonic tails
whereas if $a < 0$ (right), the profile of $\Phi$ contains $2$ monotonic and $1$ non-monotonic tails.

\section{Main results on stability of the shifted states}

Every stationary state $\Phi$ satisfying the stationary NLS equation (\ref{eq3}) is a critical point of
the action functional
\begin{equation}
\label{action}
\Lambda(\Psi) := E(\Psi) + Q(\Psi), \quad \Psi \in H^1_{\Gamma},
\end{equation}
where $Q$ and $E$ are conserved mass and energy in (\ref{mass}) and (\ref{energy}) under the NLS flow,
by Lemma \ref{lemma-wellposedness}. Substituting $\Psi = \Phi + U + iW$
with real-valued $U,W \in H^1_{\Gamma}$ into $\Lambda(\Psi)$ and expanding in $U,W$ yield
\begin{equation}
\label{second-variation-Lambda}
\Lambda(\Phi + U + i W) = \Lambda(\Phi) + \langle L_+ U, U \rangle_{L^2(\Gamma)} +
\langle L_- W, W \rangle_{L^2(\Gamma)} + N(U,W),
\end{equation}
where
\begin{eqnarray*}
\langle L_+ U, U \rangle_{L^2(\Gamma)} & := & \int_{\Gamma}
\left[ (\nabla U)^2 + U^2 - (2p+1) (p+1) \alpha^2 \Phi^{2p} U^2 \right] dx, \\
\langle L_- W, W \rangle_{L^2(\Gamma)} & := & \int_{\Gamma}
\left[ (\nabla W)^2 + W^2 - (p+1) \alpha^2 \Phi^{2p} W^2 \right] dx,
\end{eqnarray*}
and $N(U,W) = {\rm o}(\| U \|_{H^1(\Gamma)}^2 + \| W \|_{H^1(\Gamma)}^2)$ for every $p > 0$.
The quadratic forms are defined by the two Hessian operators
\begin{eqnarray}
\label{Lplus}
L_+ & = & -\Delta + 1 - (2p+1) (p+1) \alpha^2 \Phi^{2p} : H^2_{\Gamma} \subset L^2(\Gamma) \to L^2(\Gamma), \\
L_- & = & -\Delta + 1 - (p+1) \alpha^2 \Phi^{2p} : \phantom{texttext} H^2_{\Gamma} \subset L^2(\Gamma) \to L^2(\Gamma),
\label{Lminus}
\end{eqnarray}
The number of negative eigenvalues of $L_+$ and $L_-$ is referred to as the Morse index of the shifted state $\Phi$.
The following theorem represents the main result of this paper.

\begin{theorem}
\label{theorem-eigenvalues}
Let $\Phi$ be a shifted state given by Lemma \ref{solutions-II} with $a \neq 0$.
Then $\sigma_p(L_-) \geq 0$ and $0$ is a simple eigenvalue of $L_-$,
whereas the non-positive part of $\sigma_p(L_+)$ consists of a simple eigenvalue $\lambda_0 < 0$,
another eigenvalue $\lambda_1 \in (\lambda_0,0)$ of multiplicity $K-1$ for $a < 0$ and $N-K-1$ for $a > 0$,
and a simple zero eigenvalue. The rest of $\sigma_p(L_-)$ and
$\sigma_p(L_+)$ is strictly positive and is bounded away from zero.
\end{theorem}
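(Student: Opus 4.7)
The plan is to reduce the eigenvalue problems for $L_{\pm}$ on the star graph to one-dimensional Schr\"odinger problems on $\mathbb{R}$ and on shifted half-lines, exploiting the observation that on each edge $j$ the potential simplifies as $\alpha_j^2 \phi_j^{2p} = \phi(x+\epsilon_j a)^{2p}$, with $\epsilon_j = +1$ for $j \leq K$ and $\epsilon_j = -1$ for $j > K$, so the $\alpha$-dependence cancels. Consequently, $L_{\pm}$ act edgewise as shifted copies of the scalar operators $\ell_+ := -\partial_x^2 + 1 - (2p+1)(p+1)\phi^{2p}$ and $\ell_- := -\partial_x^2 + 1 - (p+1)\phi^{2p}$ on $\mathbb{R}$, whose spectra are classical: $\ell_-$ has $\phi > 0$ as ground state at eigenvalue $0$ with no negative spectrum, while $\ell_+$ has a simple negative eigenvalue $\mu_0$ with even positive eigenfunction $v_0$, a simple zero spanned by the odd function $\phi'$, and essential spectrum $[1,\infty)$.

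First I would handle $L_-$ by directly verifying $\Phi \in \ker L_-$: each $\phi_j$ solves $\ell_- \phi_j = 0$, continuity at the vertex holds because $\phi$ is even (so $\alpha_j^{1/p}\phi_j(0) = \phi(a)$ for every $j$), and the Kirchhoff sum collapses to $\phi'(a)\bigl[\sum_{j \leq K}\alpha_j^{-2/p} - \sum_{j > K}\alpha_j^{-2/p}\bigr] = 0$ by the constraint (\ref{alpha_const}). Because $\Phi$ is strictly positive on every edge, a Perron--Frobenius-type argument identifies $0$ as the simple ground-state eigenvalue of $L_-$, yielding $\sigma_p(L_-) \geq 0$ with a simple kernel.

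For $L_+$, I would work below the essential spectrum using the Jost solution $\psi_\lambda$ of $(\ell_+ - \lambda)u = 0$ decaying at $+\infty$: every bounded eigenfunction must take the form $u_j(x) = A_j \psi_\lambda(x + \epsilon_j a)$ on edge $j$. Writing $c := \alpha_j^{1/p} u_j(0)$ for the common continuity value, two cases arise. If $c \neq 0$, eliminating the $A_j$ and applying (\ref{alpha_const}) collapses the Kirchhoff condition to
\[
\psi_\lambda'(a)\psi_\lambda(-a) + \psi_\lambda'(-a)\psi_\lambda(a) = 0,
\]
which is exactly the vanishing of the Wronskian of $x \mapsto \psi_\lambda(x)$ and $x \mapsto \psi_\lambda(-x)$; with even potential, this forces $\psi_\lambda$ to be an $L^2$ eigenfunction of $\ell_+$ on $\mathbb{R}$, so $\lambda \in \{\mu_0, 0\}$. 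Each value yields a one-dimensional symmetric eigenspace of $L_+$, producing the simple $\lambda_0 = \mu_0 < 0$ with eigenfunction proportional to $\alpha_j^{-1/p} v_0(x + \epsilon_j a)$ and the simple zero spanned by $\partial_a \Phi$. If instead $c = 0$, every $u_j$ vanishes at the vertex and must be a Dirichlet eigenfunction of $\ell_+$ on the shifted half-line $[\epsilon_j a, \infty)$ at the same $\lambda$.

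A Sturm/monotonicity analysis then shows that the Dirichlet problem for $\ell_+$ on $[b, \infty)$ has a unique negative eigenvalue $\lambda_0^D(b) \in (\mu_0, 0)$, strictly increasing in $b$, with $\lambda_0^D(b) \to \mu_0$ as $b \to -\infty$ and $\lambda_0^D(0) = 0$ (realized by $\phi'|_{\mathbb{R}^+}$), and no negative Dirichlet eigenvalue for $b \geq 0$. Hence only the edges with $\epsilon_j a < 0$ (outgoing edges if $a > 0$, incoming edges if $a < 0$) admit a $c = 0$ eigenfunction, all at the common value $\lambda_1 := \lambda_0^D(-|a|) \in (\mu_0, 0)$; the Kirchhoff condition (continuity being automatic) imposes one linear constraint on the $(N - K)$- or $K$-parameter family, yielding multiplicity $N - K - 1$ for $a > 0$ and $K - 1$ for $a < 0$. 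The same Jost analysis applied to $\ell_-$ confirms that no negative Dirichlet eigenvalue appears on any half-line, since the Jost solution of $\ell_-$ is sign-definite for $\lambda \leq 0$. Essential spectrum $[1, \infty)$ of both $L_{\pm}$ and the gap between $0$ and the next positive eigenvalue follow from standard compact-perturbation and continuity arguments. The principal technical obstacle, where the authors' extension of Sturm oscillation theory to star graphs becomes essential, is the half-line count for $\ell_+$: establishing strict monotonicity of $\lambda_0^D(b)$ across $b = 0$ and ruling out a second negative Dirichlet eigenvalue for $b \in (-\infty, 0)$.
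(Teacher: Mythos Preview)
Your approach is correct and follows the same overall strategy as the paper: reduce to the decaying Jost solution of the scalar operator $\ell_+$, impose the vertex conditions, and count via Sturm theory. The organization differs in two minor respects. First, the paper derives the eigenvalue conditions by computing the determinant of the $N\times N$ linear system for the edge coefficients and factoring it as $v(a)^{K-1}v(-a)^{N-K-1}\bigl[v(-a)v'(a)+v(a)v'(-a)\bigr]$ (Lemma~\ref{lem-eig}), whereas you split directly on whether the common vertex value $c$ vanishes; both routes yield the same three cases with the same multiplicities. Second, for the $c\neq 0$ case you observe that $\psi_\lambda'(a)\psi_\lambda(-a)+\psi_\lambda'(-a)\psi_\lambda(a)$ is (up to sign) the constant Wronskian of $\psi_\lambda(x)$ and $\psi_\lambda(-x)$, so its vanishing is \emph{equivalent} to $\lambda$ being a full-line eigenvalue of $\ell_+$; the paper reaches the same conclusion by noting that condition~(c) holds when $v$ is even or odd and then asserting that no other values of $\lambda_0\le 0$ satisfy~(c). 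Your Wronskian formulation makes this step cleaner. For the $c=0$ case, your framing via the lowest Dirichlet eigenvalue $\lambda_0^D(b)$ on $[b,\infty)$ is precisely the inverse-function version of the paper's Lemma~\ref{Sturm}, which instead tracks the zero $x_0(\lambda)$ of the Jost solution as a monotone function of $\lambda$; both encode the same Sturm monotonicity and give the same count.
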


\begin{remark}
If $a = 0$, it was established in our previous work \cite{KP} that
the non-positive part of $\sigma_p(L_+)$ for the half-solitons (\ref{half-soliton}) consists
of a simple eigenvalue $\lambda_0 < 0$ and a zero eigenvalue of multiplicity $N-1$.
\end{remark}

By using the well-known results for the NLS equation
\cite{Grillakis} (see also \cite{CPV}), we can deduce spectral instability of
the shifted states from Theorem \ref{theorem-eigenvalues}.

\begin{corollary}
\label{corollary-instability}
With the exception of case (i) with $N = 2$ or case (ii) with $a < 0$,
the shifted states with $a \neq 0$
are spectrally unstable in the time evolution of the NLS equation (\ref{eq1}),
in particular, there exists real positive eigenvalues $\lambda$ in the spectral stability problem
\begin{equation}
\lambda \left[ \begin{array}{c} U \\ W \end{array} \right] =
\left[ \begin{array}{cc} 0 & L_- \\ -L_+ & 0 \end{array} \right] \left[ \begin{array}{c} U \\ W \end{array} \right].
\end{equation}
Moreover, with the account of algebraic multiplicity, for $p \in (0,2)$, there exist
$K-1$ real positive eigenvalues $\lambda$ for $a < 0$ and $N-K-1$ real positive eigenvalues for $a > 0$.
The shifted states of case (i) with $N = 2$ or case (ii) with $a < 0$ are spectrally stable for $p \in (0,2)$.
\end{corollary}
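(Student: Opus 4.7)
The plan is to convert the non-self-adjoint spectral stability problem into a self-adjoint one via the factorization $L_- = L_-^{1/2} L_-^{1/2}$ (well-defined since $L_- \geq 0$ by Theorem \ref{theorem-eigenvalues}), and then count real positive eigenvalues by combining Sylvester's law of inertia with a rank-one slope formula involving the mass $Q(\omega)$.

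First, since $\ker L_- = \mathrm{span}(\Phi)$ by Theorem \ref{theorem-eigenvalues}, any eigenvalue $\lambda \neq 0$ of the stability problem satisfies $\lambda U = L_- W$, $\lambda W = -L_+ U$, forcing $U \perp \Phi$ and reducing to $\lambda^2 U = -L_- L_+ U$. The substitution $V = L_-^{1/2} U$ turns this into the self-adjoint eigenvalue problem $\lambda^2 V = -L_-^{1/2} L_+ L_-^{1/2} V$ on $\Phi^\perp$. Consequently, every nonzero eigenvalue in the stability problem is real or purely imaginary, and the real positive ones correspond precisely to the negative eigenvalues of $B := L_-^{1/2} L_+ L_-^{1/2}$. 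By Sylvester's law of inertia, $n(B) = n(L_+|_{\Phi^\perp})$. The standard rank-one perturbation formula then gives
\begin{equation*}
n(L_+|_{\Phi^\perp}) = n(L_+) - 1 \quad \text{whenever} \quad \Phi \in \mathrm{Range}(L_+) \;\;\text{and}\;\; \langle L_+^{-1}\Phi, \Phi\rangle < 0.
\end{equation*}
Differentiating the stationary NLS equation (\ref{eq2}) in $\omega$ at $\omega=1$ yields $L_+\partial_\omega\Phi_\omega|_{\omega=1} = -\Phi$, so $\Phi \in \mathrm{Range}(L_+)$ and $\langle L_+^{-1}\Phi,\Phi\rangle = -\tfrac{1}{2} Q'(1)$. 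Using the scaling $\Phi_\omega(x) = \omega^{1/(2p)}\Phi(\omega^{1/2}x)$, one computes $Q(\omega) = \omega^{1/p - 1/2}\|\Phi\|_{L^2(\Gamma)}^2$, so $Q'(1) = (1/p - 1/2)\|\Phi\|_{L^2(\Gamma)}^2 > 0$ for $p \in (0,2)$, and hence $\langle L_+^{-1}\Phi,\Phi\rangle < 0$. A subtle but essential point is that $L_+^{-1}\Phi$ is only determined modulo $\ker L_+ = \mathrm{span}(\partial_a \Phi_a)$; this ambiguity disappears because the constraint (\ref{alpha_const}) implies that $Q(\Phi_a)$ is independent of $a$, so $\partial_a \Phi_a \perp \Phi$ and the quadratic form is unambiguous.

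Substituting the Morse indices from Theorem \ref{theorem-eigenvalues} ($n(L_+) = K$ for $a < 0$ and $n(L_+) = N-K$ for $a > 0$) gives exactly $K-1$ and $N-K-1$ real positive eigenvalues, respectively. In the exceptional cases (case (i) with $N = 2$, or case (ii) with $a < 0$) these counts vanish, and the self-adjoint reduction above precludes eigenvalues with nonzero real part that are not real, so spectral stability follows. In all other cases the count is at least one, giving spectral instability; alternatively, for any $p > 0$ one may appeal to Grillakis' instability theorem \cite{Grillakis}, which applies whenever $n(L_+) - n(L_-) \geq 2$, and this holds since $n(L_+) \geq 2$ in the non-exceptional regime. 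The main technical hurdle I anticipate is rigorously justifying the infinite-dimensional Sylvester identity and the rank-one formula for unbounded Schr\"{o}dinger-type operators on the star graph, together with checking that the preimage $\partial_\omega \Phi_\omega|_{\omega=1}$ lies in $H_\Gamma^2$ (which follows since the boundary conditions (\ref{H1})--(\ref{H2}) are homogeneous in $\psi$ and hence invariant under the $\omega$-scaling).
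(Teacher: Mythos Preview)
The paper itself gives no proof of this corollary beyond the sentence ``By using the well-known results for the NLS equation \cite{Grillakis} (see also \cite{CPV}), we can deduce spectral instability of the shifted states from Theorem \ref{theorem-eigenvalues}''; your proposal is precisely a spelled-out version of that standard machinery (the self-adjoint reduction via $L_-^{1/2}$, Sylvester's law of inertia, and the Vakhitov--Kolokolov slope condition $\tfrac{d}{d\omega}Q(\Phi_\omega)>0$ for $p\in(0,2)$), so the approaches coincide. One cosmetic slip: to obtain $\lambda^2 V=-L_-^{1/2}L_+L_-^{1/2}V$ from $\lambda^2 U=-L_-L_+U$ the substitution should be $U=L_-^{1/2}V$ (equivalently $V=L_-^{-1/2}U$ on $\Phi^\perp$), not $V=L_-^{1/2}U$.
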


\begin{remark}
The result of Theorem \ref{theorem-eigenvalues} and Corollary \ref{corollary-instability} in case (i)
agrees with the qualitative picture described in Remark 5.3 in \cite{AdamiJDE1} and proves
the conjecture formulated in Remark 5.4 in \cite{AdamiJDE1} that all shifted states
(\ref{soliton-shifted}) given by
Lemma \ref{solutions} are unstable for all even $N \geq 4$.
If $N$ is even, the graph can be considered as a set of $K = N/2$ copies of the real line
and the shifted state can be interpreted as $K = N/2$ identical solitary waves on each real line
translated by the shift parameter $a \in \mathbb{R}$.
Since $L_-$ is positive at each solitary wave with a simple zero eigenvalue
and $L_+$ has a simple negative and a simple zero eigenvalue at each solitary wave,
we can count $N/2$ negative eigenvalues (with the account of their multiplicity),
in agreement with the statement of Theorem \ref{theorem-eigenvalues}.
However, the multiplicity of the zero eigenvalue is not explained in this qualitative picture,
and the count is incorrect for the half-soliton state (\ref{half-soliton}), which corresponds to the case $a = 0$,
see Remark \ref{remark-a-0}.
\end{remark}

\begin{remark}
Nonlinear instability of the half-soliton state (\ref{half-soliton}) in case (i) with even $N \geq 4$
is proved in the previous paper \cite{KP} based on the characterization
of the half-soliton state (\ref{half-soliton}) as a degenerate saddle point of the
action functional (\ref{action}) under the constraint of fixed mass $Q$.
The latter variational characterization was also given in \cite{AdamiJPA}.
The same argument holds for the half-soliton state (\ref{half-soliton}) for any $N \geq 3$.
\end{remark}

\begin{remark}
The difference between the normal form equations in \cite{KP} for odd and even $N$
clearly indicates the presence of shifted states in case (i). The zero equilibrium state
of the normal form equations is isolated for odd $N$, whereas it occurs at the intersection of $C_N$ lines
of equilibria for even $N$, where $C_N$ is given by (\ref{C-N}). In the latter case, for $N \geq 4$,
transverse directions in the energy space to the $C_N$ lines of equilibria imply
the saddle point geometry of the half-soliton state (\ref{half-soliton}) as is proven in \cite{KP}
for every $N \geq 3$.
\end{remark}

\begin{remark}
For the two spectrally stable shifted states in Corollary \ref{corollary-instability},
the case (i) with $N = 2$ is orbitally stable since the NLS equation on the star graph with $N = 2$
is completely equivalent to the NLS equation on the infinite line $\mathbb{R}$.
The case (ii) with $a < 0$ is more challenging since the orbit of the shifted state has
to be two-parametric due to phase rotation and translation in space, whereas the
star graph $\Gamma$ is not equivalent to the real line $\mathbb{R}$ if $N \geq 3$.
The translation in space can not be simply incorporated in the consideration because of
the vertex point of the star graph. In Section 7, we give
a simple argument why the shifted states (\ref{soliton-shifted-II})
with $a < 0$ are expected to be nonlinearly unstable under the NLS flow.
\end{remark}

\section{Count of the Morse index for the shifted states}

In order to prove Theorem \ref{theorem-eigenvalues}, we observe that the operators
$L_+$ and $L_-$ defined in (\ref{Lplus}) and (\ref{Lminus}) are self-adjoint in $L^2(\Gamma)$.
Since the bounded and exponentially decaying potential $\alpha^2 \Phi^{2p}$ is a relatively compact perturbation to
the unbounded operator $L_0 := -\Delta+1$, the absolutely continuous spectra of $L_\pm$,
by Weyl's Theorem, is given by $\sigma_c(L_{\pm}) = \sigma(L_0) = [1,\infty)$.
Therefore, we are only concerned about the eigenvalues of $\sigma_p(L_{\pm})$ in $(-\infty,1)$.

Since $\Phi(x) > 0$ for all $x \in \Gamma$, the same arguments as in Lemma 3.1 in \cite{KP} imply
that $\sigma_p(L_-)$ is nonnegative, $0 \in \sigma_p(L_-)$ is a simple eigenvalue with the eigenvector
$\Phi$, and all other eigenvalues in $\sigma_p(L_-)$ are bounded away from zero.
Hence we only need to consider $\sigma_p(L_+)$ in $(-\infty,1)$.

Spectral analysis involving various extensions of the operator $L_+$,
Neumann formula, and the count of the deficiency index is used in \cite{Pava}
in a similar context. Instead of developing a functional-analytic technique, we will prove
the assertion of Theorem \ref{theorem-eigenvalues} about $\sigma_p(L_+)$ by using Sturm's
nodal count for the scalar Schr\"{o}dinger equations.

Sturm theory is well-known in the context of the Sturm--Liouville boundary-value problem
on the finite interval (see, e.g., Section 5.1 in \cite{Teschl}).
This theory was generalized for bounded graphs in a number of recent publications
(see review in Section 5.2 in \cite{Kuchment}). Extension of this theory
to the unbounded graphs, e.g. to the star graph $\Gamma$, is not known
to the best of author's knowledge. As an outcome of our work, we will show that
the Sturm's nodal count is valid for the shifted states with $a \neq 0$ on the star graph $\Gamma$
in the same way as it is valid to the case of finite intervals or bounded graphs.

By using the representation (\ref{soliton-shifted-II}), let us consider
the exponentially decaying solutions of the second-order differential equation
\begin{equation} \label{shiftedeqofcomp}
- u''(x) + u(x) - (2p+1)(p+1) \sech^2(p(x + a)) u(x) = \lambda u(x), \quad x \in (0,\infty), \quad \lambda < 1,
\end{equation}
where $a \in \mathbb{R}$ is a parameter. By means of the substitution $u(x) = v(x + a)$ for $x \in (0,\infty)$,
exponentially decaying solutions $u$ to the equation (\ref{shiftedeqofcomp}) are equivalent to
exponentially decaying solutions $v$ of the second-order differential equation
\begin{equation} \label{eqofcomp}
- v''(x) + v(x) - (2p+1)(p+1) \sech^2(px) v(x) = \lambda v(x), \quad x \in (a,\infty), \quad \lambda < 1.
\end{equation}

The following lemmas extend some well-known results on the scalar Schr\"{o}dinger equation (\ref{eqofcomp}).

\begin{lemma}
\label{solhyp}
For every $\lambda<1$, there exists a unique solution $v \in C^1(\mathbb{R})$ to equation (\ref{eqofcomp})
such that
\begin{equation}
\label{u-limit}
\lim_{x \to +\infty} v(x) e^{\sqrt{1-\lambda} x} = 1.
\end{equation}
Moreover, for any fixed $x_0 \in \mathbb{R}$, $v(x_0)$ is a $C^1$ function of $\lambda$ for $\lambda < 1$.
The other linearly independent solution to equation (\ref{eqofcomp}) diverges as $x \to +\infty$.
\end{lemma}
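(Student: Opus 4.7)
The plan is to realize $v$ as the Jost solution of the ODE $-v''+v-f(x)v=\lambda v$, with $f(x):=(2p+1)(p+1)\sech^2(px)$, through a Volterra integral equation on the half-line at $+\infty$. Setting $\mu:=\sqrt{1-\lambda}>0$, the ODE is equivalent to $v''-\mu^2 v=-f v$, and variation of parameters with the homogeneous solutions $e^{\pm\mu x}$ suggests looking for
\begin{equation}
\label{plan-volt}
v(x) = e^{-\mu x} - \int_x^\infty \frac{\sinh(\mu(y-x))}{\mu}\,f(y)\,v(y)\,\diff y, \qquad x\in\mathbb{R}.
\end{equation}
A direct differentiation shows any bounded continuous solution of \eqref{plan-volt} is a classical $C^2$ solution of the ODE, and $C^\infty$ regularity follows by bootstrapping since the coefficient is smooth.

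Existence is produced by Neumann iteration applied to the rescaled unknown $w(x):=e^{\mu x}v(x)$, which reduces \eqref{plan-volt} to
\begin{equation}
\label{plan-w}
w(x) = 1 - \int_x^\infty \frac{1-e^{-2\mu(y-x)}}{2\mu}\,f(y)\,w(y)\,\diff y,
\end{equation}
with kernel bounded by $(2\mu)^{-1}$. Starting from $w_0\equiv 1$ and writing $F(x):=\int_x^\infty f(y)\,\diff y$, a short induction (using $F'=-f$) yields
\[
|w_{n+1}(x)-w_n(x)| \leq \frac{1}{n!}\Bigl(\frac{F(x)}{2\mu}\Bigr)^n,
\]
so the Neumann series converges uniformly on $\mathbb{R}$ to a bounded continuous $w$. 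Since $F(x)\to 0$ as $x\to+\infty$ and $f\in L^1(\mathbb{R})$, the resulting $v(x)=e^{-\mu x}w(x)$ satisfies the normalization $v(x)e^{\sqrt{1-\lambda}\,x}\to 1$ demanded by \eqref{u-limit}.

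Uniqueness and the statement about the second linearly independent solution are handled together. A parallel Volterra construction, normalized by $\tilde v(x)e^{-\mu x}\to 1$ as $x\to+\infty$, produces a second solution $\tilde v$ that diverges exponentially at $+\infty$. Its Wronskian $W(v,\tilde v)$ is constant in $x$, and evaluating it at $+\infty$ from the asymptotics gives $W(v,\tilde v)=2\mu\neq 0$, so $\{v,\tilde v\}$ is a fundamental system of the ODE. If two solutions $v_1,v_2$ both satisfied \eqref{u-limit}, expanding $v_1-v_2=\alpha v+\beta\tilde v$ and using that $(v_1-v_2)e^{\mu x}\to 0$ forces $\beta=0$ first and then $\alpha=0$; simultaneously, every solution not proportional to $v$ has $\beta\neq 0$ and therefore blows up at $+\infty$.

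For the $C^1$ dependence of $v(x_0)$ on $\lambda$, note that $\mu(\lambda)=\sqrt{1-\lambda}$ is smooth on $(-\infty,1)$, and on any compact subinterval $[\lambda_1,\lambda_2]\subset(-\infty,1)$ the Neumann estimate above is uniform in $\lambda$. Each iterate $w_n(x,\lambda)$ is smooth in $\lambda$, and differentiating \eqref{plan-w} formally in $\lambda$ produces another Volterra equation of the same type for $\partial_\lambda w$, whose unique bounded solution is obtained by the identical iteration. Uniform convergence of both the original and the differentiated series on compact subsets of $\lambda$ justifies interchange of differentiation and summation, yielding joint $C^1$ regularity of $(x,\lambda)\mapsto w(x,\lambda)$ and hence of $v(x_0,\lambda)$. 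The main technical care is precisely this justification of term-by-term differentiation; as an alternative I would transfer smooth dependence obtained at some $x_*\gg x_0$ (directly from the parametric contraction on $[x_*,\infty)$) back to the fixed $x_0$ via smooth dependence of the initial value problem on parameters on the finite interval $[x_0,x_*]$, a standard consequence of Picard--Lindel\"of theory.
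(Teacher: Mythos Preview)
Your proposal is correct and follows essentially the same route as the paper: the same Volterra integral equation for the Jost solution, the same rescaling $w=e^{\mu x}v$, Neumann iteration for existence, the Wronskian for the second solution, and smooth dependence of the Volterra equation on $\lambda$ for the $C^1$ claim. You supply more detail than the paper in several places (the explicit iteration bound $\tfrac{1}{n!}(F/2\mu)^n$, the explicit Wronskian value $2\mu$, and the two alternative justifications of $C^1$ dependence), but the architecture is identical.
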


\begin{proof}
The proof is based on the reformulation of the boundary--value problem (\ref{eqofcomp})--(\ref{u-limit})
as Volterra's integral equation. By means of Green's function, the solution to (\ref{eqofcomp})--(\ref{u-limit})
can be found from the inhomogeneous integral equation
\begin{equation}
\label{Volterra}
v(x) = e^{-\sqrt{1-\lambda}x} - \frac{(2p+1)(p+1)}{\sqrt{1-\lambda}}
\int_x^{\infty} \sinh(\sqrt{1-\lambda}(x-y)) \sech^2(py) v(y) \diff y.
\end{equation}
Setting $w(x;\lambda) = v(x) e^{\sqrt{1-\lambda}x}$ yields the
following Volterra's integral equation with a bounded kernel:
\begin{equation}
\label{VolterraMod}
w(x;\lambda) = 1 + \frac{(2p+1)(p+1)}{2\sqrt{1-\lambda}} \int_x^{\infty}  ( 1-e^{-2\sqrt{1-\lambda}(y-x)}) \sech^2(py) w(y;\lambda) \diff y.
\end{equation}
By standard Neumann series, the existence and uniqueness of a bounded solution $w(\cdot;\lambda) \in C^1(x_0,\infty)$
with $\lim_{x \to \infty} w(x;\lambda) = 1$ is obtained for every $\lambda < 1$ and sufficiently large $x_0 \gg 1$.
By the ODE theory, this solution is extended globally as a solution $w(\cdot;\lambda) \in C^1(\mathbb{R})$
of the integral equation (\ref{VolterraMod}). This construction yields a solution $v \in C^1(\mathbb{R})$
to the differential equation (\ref{eqofcomp}) with the exponential decay as $x \to +\infty$ given by (\ref{u-limit}).
Since the Volterra's integral equation (\ref{Volterra}) depends analytically on $\lambda$ for $\lambda < 1$,
then $v(x_0)$ is (at least) $C^1$ function of $\lambda < 1$ for any fixed $x_0 \in \mathbb{R}$.
Thanks to the $x$-independent and nonzero Wronskian determinant between two linearly independent solutions
to the second-order equation (\ref{eqofcomp}), the other linearly independent solution diverges
exponentially as $x \to +\infty$.
\end{proof}

\begin{lemma}
\label{lem-Sturm}
Let $v$ be the solution defined in Lemma \ref{solhyp}. If $v(0) = 0$ (resp. $v'(0) = 0$) for some $\lambda_0 < 1$,
then the corresponding eigenfunction $v$ to the Schr\"{o}dinger equation (\ref{eqofcomp}) is
an odd (resp. even) function on $\mathbb{R}$, whereas $\lambda_0$ is an eigenvalue of
the associated Schr\"{o}dinger operator defined in $L^2(\mathbb{R})$.
There exists exactly one $\lambda_0 < 0$ corresponding to $v'(0) = 0$ and a simple eigenvalue $\lambda_0 = 0$
corresponding to $v(0) = 0$, all other possible points $\lambda_0$ are located in $(0,1)$ bounded away from zero.
\end{lemma}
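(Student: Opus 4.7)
My plan is to convert the vanishing conditions $v(0)=0$ and $v'(0)=0$ on the decaying solution $v$ from Lemma \ref{solhyp} into a spectral problem for the self-adjoint Schr\"{o}dinger operator
\[
L := -\partial_x^2 + 1 - (2p+1)(p+1)\sech^2(px) \quad\text{on}\quad L^2(\mathbb{R}),
\]
exploiting that its potential is even. Since equation (\ref{eqofcomp}) is invariant under $x\mapsto -x$, the map $x \mapsto v(-x)$ is the unique solution decaying at $-\infty$. If $v(0)=0$, the odd extension $V(x)=\operatorname{sgn}(x)\,v(|x|)$ satisfies $V(0)=0$ and $V'(0^\pm)=v'(0)$, hence $V \in C^1(\mathbb{R})$; combined with the ODE on each half-line, $V \in H^2(\mathbb{R})$, producing an odd exponentially decaying eigenfunction of $L$ at $\lambda_0$. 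Similarly $v'(0)=0$ yields an even eigenfunction via $V(x)=v(|x|)$. Conversely, because $L$ commutes with parity and has essential spectrum $[1,\infty)$ by Weyl's theorem, every eigenvalue $\lambda_0<1$ admits an eigenfunction of definite parity whose restriction to $(0,\infty)$, normalized by (\ref{u-limit}), coincides with $v$; the even case forces $v'(0)=0$ and the odd case forces $v(0)=0$.

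Next I would locate the non-positive spectrum of $L$ explicitly. Taking $\phi(x)=\sech^{1/p}(px)$ as a trial function and using $-\phi''+\phi=(p+1)\phi^{2p+1}$ gives
\[
\langle L\phi,\phi\rangle_{L^2(\mathbb{R})} = -2p(p+1)\int_{\mathbb{R}}\phi^{2p+2}\,dx < 0,
\]
so the Rayleigh quotient is strictly negative and $L$ has a ground state $\mu_0<0$. Since one-dimensional Schr\"{o}dinger operators on $\mathbb{R}$ have simple eigenvalues (the spaces of decaying solutions at $\pm\infty$ are one-dimensional and must coincide at an eigenvalue), $\mu_0$ is simple; the standard Perron--Frobenius argument makes its eigenfunction strictly positive, hence even by the symmetry of the potential, yielding the claimed unique $\lambda_0<0$ with $v'(0)=0$. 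Differentiating the equation for $\phi$ gives $L\phi'=0$, and $\phi'$ is odd, smooth, exponentially decaying, and vanishes only at $x=0$, so $0 \in \sigma_p(L)$ with eigenfunction $\phi'$ (necessarily simple by the same simplicity principle).

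To conclude, I would invoke Sturm's nodal theorem for $L$ below the essential spectrum: ordered as $\mu_0<\mu_1<\cdots$, the $n$-th eigenfunction has exactly $n$ zeros on $\mathbb{R}$. Since $\phi'$ has precisely one zero, it must be the eigenfunction at $\mu_1$, forcing $\mu_1=0$; this simultaneously rules out any eigenvalue in $(\mu_0,0)$ and identifies $0$ as the unique non-negative value associated with $v(0)=0$. Any remaining eigenvalues $\mu_2,\mu_3,\ldots$ are then strictly positive, finite in number (by a Bargmann-type bound, since the potential decays exponentially), and isolated from $0$, hence bounded away from it. The main obstacle is this final nodal-count step, since classical Sturm oscillation theory is formulated on compact intervals; I would discharge it either by Dirichlet truncation on $[-R,R]$ with $R\to\infty$, using the exponential decay of eigenfunctions to control how many zeros survive in the limit, or by a direct oscillation comparison between the known eigenfunctions at $\mu_0$, $0$, and any hypothetical intermediate eigenvalue.
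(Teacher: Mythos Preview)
Your proposal is correct and follows essentially the same route as the paper: use the reversibility $x\mapsto -x$ of equation (\ref{eqofcomp}) to extend $v$ to an odd or even $L^2(\mathbb{R})$-eigenfunction, identify $\phi'$ as the odd eigenfunction at $\lambda_0=0$ with a single zero, and invoke Sturm's nodal count to conclude that $0$ is the second eigenvalue and the unique negative eigenvalue corresponds to an even eigenfunction. The paper's proof is much terser and simply cites Sturm's theorem, whereas you supply additional justification (the Rayleigh-quotient bound, Perron--Frobenius for the ground state, simplicity of one-dimensional eigenvalues, and the truncation argument to extend oscillation theory to the line); these are legitimate elaborations of the same argument rather than a different approach.
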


\begin{proof}
Extension of $v$ to an eigenfunction of the associated Schr\"{o}dinger operator defined in $L^2(\mathbb{R})$
follows by the reversibility of the Schr\"{o}dinger equation (\ref{eqofcomp}) with
respect to the transformation $x \mapsto -x$. The count of eigenvalues follows by Sturm's Theorem since
the odd eigenfunction for the eigenvalue $\lambda_0 = 0$,
\begin{equation}
\label{derivative-mode}
\phi'(x) = - \sech^{\frac{1}{p}}(px) \tanh(px)
\end{equation}
has one zero on the infinite line. Hence, $\lambda_0 = 0$ is the second eigenvalue of
the Schr\"{o}dinger equation (\ref{eqofcomp}) with exactly one simple negative eigenvalue $\lambda_0 < 0$
that corresponds to an even eigenfunction.
\end{proof}

\begin{lemma}
\label{Sturm}
Let $v = v(x;\lambda)$ be the solution defined by Lemma \ref{solhyp}. Assume that $v(x;\lambda_1)$
has a simple zero at $x = x_1 \in \mathbb{R}$ for some $\lambda_1 \in (-\infty,1)$.
Then, there exists a unique $C^1$ function
$\lambda \mapsto x_0(\lambda)$ for $\lambda$ near $\lambda_1$ such that $v(x;\lambda)$ has a simple
zero at $x = x_0(\lambda)$ with $x_0(\lambda_1) = x_1$ and $x_0'(\lambda_1) > 0$.
\end{lemma}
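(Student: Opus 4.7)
The plan is to combine the implicit function theorem with the classical Sturm comparison identity. By Lemma~\ref{solhyp}, $v(x;\lambda)$ is $C^1$ in $\lambda$, and the ODE (\ref{eqofcomp}) ensures joint $C^1$ regularity in $(x,\lambda)$. Since $v(x_1;\lambda_1) = 0$ is a simple zero, $\partial_x v(x_1;\lambda_1) \neq 0$, so the implicit function theorem applied to $F(x,\lambda) := v(x;\lambda)$ produces a unique $C^1$ curve $\lambda \mapsto x_0(\lambda)$ near $\lambda_1$ with $v(x_0(\lambda);\lambda) = 0$, $x_0(\lambda_1) = x_1$, and
$$
x_0'(\lambda_1) \;=\; -\,\frac{\partial_\lambda v(x_1;\lambda_1)}{\partial_x v(x_1;\lambda_1)}.
$$
Simplicity of the zero persists for $\lambda$ near $\lambda_1$ by continuity of $\partial_x v$, and the remaining task reduces to showing that this ratio is strictly negative.

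For the sign, I derive a Sturm-type identity. Differentiating the ODE in $\lambda$ and writing $v_\lambda := \partial_\lambda v$ gives
$$
-v_\lambda'' + v_\lambda - (2p+1)(p+1)\sech^2(px)\, v_\lambda - \lambda v_\lambda \;=\; v,
$$
and multiplying the equation for $v$ by $v_\lambda$, subtracting the equation for $v_\lambda$ times $v$, produces the exact derivative
$$
\frac{d}{dx}\bigl( v\, v_\lambda' - v'\, v_\lambda \bigr)(x) \;=\; -\,v(x)^2.
$$
Integrating on $(x_1,\infty)$ at $\lambda = \lambda_1$, using $v(x_1;\lambda_1) = 0$ at the lower endpoint and the vanishing of the bracket at $+\infty$ (justified below), I obtain
$$
\partial_x v(x_1;\lambda_1)\, \partial_\lambda v(x_1;\lambda_1) \;=\; -\int_{x_1}^{\infty} v(y;\lambda_1)^2 \diff y \;<\; 0,
$$
so the IFT ratio is strictly negative and $x_0'(\lambda_1) > 0$.

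The main technical point is the vanishing of $v\, v_\lambda' - v'\, v_\lambda$ at $+\infty$. From (\ref{u-limit}), $v$ and $v'$ decay like $e^{-\sqrt{1-\lambda}x}$; for $v_\lambda$ and $v_\lambda'$ I would differentiate the Volterra equation (\ref{VolterraMod}) in $\lambda$, obtaining a second Volterra equation for $\partial_\lambda w$ with the same bounded kernel plus a source term of order $x\,\sech^2(px)\, w$. A Neumann-series iteration then shows $v_\lambda(x;\lambda)$ and $v_\lambda'(x;\lambda)$ grow at most like $x\, e^{-\sqrt{1-\lambda}x}$, so the Wronskian-type quantity decays like $x\, e^{-2\sqrt{1-\lambda}x}$ and tends to zero at infinity. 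This decay estimate is the only delicate point; once it is in hand, the rest of the argument is a one-line computation.
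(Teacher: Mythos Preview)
Your argument is correct and follows essentially the same route as the paper: implicit function theorem for the existence of $x_0(\lambda)$, then the Sturm--Wronskian identity obtained by differentiating the ODE in $\lambda$ and integrating on $(x_1,\infty)$ to get $\partial_x v(x_1;\lambda_1)\,\partial_\lambda v(x_1;\lambda_1)=-\int_{x_1}^\infty v^2<0$, which is exactly the paper's equation (\ref{der-vv-eq}) written as a Wronskian derivative rather than via two integrations by parts. Your treatment of the decay of $v_\lambda$ at $+\infty$ is slightly more detailed than the paper's (which simply invokes the Volterra argument of Lemma~\ref{solhyp} to assert that $\tilde v$ and $\tilde v'$ vanish at infinity), but the content is the same.
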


\begin{proof}
By Lemma \ref{solhyp}, $v$ is a $C^1$ function of $x$ and $\lambda$ for every $x \in \mathbb{R}$
and $\lambda \in (-\infty,1)$. Since $x_1$ is a simple zero of $v(x;\lambda_1)$,
we have $\partial_x v(x_1;\lambda_1) \neq 0$. By the implicit function theorem,
there exists a unique $C^1$ function $\lambda \mapsto x_0(\lambda)$ for $\lambda$ near $\lambda_1$
such that $v(x;\lambda)$ has a simple
zero at $x = x_0(\lambda)$ with $x_0(\lambda_1) = x_1$.
It remains to show that $x_0'(\lambda_1) > 0$.

Differentiating $v(x_0(\lambda);\lambda) = 0$ in $\lambda$ at $\lambda = \lambda_1$,
we obtain
\begin{equation}
\label{root-der-eq}
\partial_x v(x_1;\lambda_1) x_0'(\lambda_1) + \partial_{\lambda} v(x_1;\lambda_1) = 0.
\end{equation}
Let us denote $\tilde{v}(x) = \partial_{\lambda} v(x;\lambda_1)$. Differentiating
equation (\ref{eqofcomp}) in $\lambda$ yields the inhomogeneous
differential equation for $\tilde{v}$:
\begin{equation} \label{der-v-eq}
- \tilde{v}''(x) + \tilde{v}(x) - (2p+1)(p+1) \sech^2(px) \tilde{v}(x) =
\lambda_1 \tilde{v}(x) + v(x;\lambda_1), \quad x \in (a,\infty), \quad \lambda < 1.
\end{equation}
By the same method based on the Volterra's integral equation as in Lemma \ref{solhyp},
the function $\tilde{v}$ is $C^1$ in $x$ and decays to zero as $x \to \infty$.
Therefore, by multiplying equation (\ref{der-v-eq}) by $v(x;\lambda_1)$,
integrating by parts on $[x_1,\infty)$,
and using equation (\ref{eqofcomp}), we obtain
\begin{equation} \label{der-vv-eq}
- \partial_x v(x_1;\lambda_1) \tilde{v}(x_1) = \int_{x_1}^{\infty} v(x;\lambda_1)^2 dx,
\end{equation}
where we have used $v(x_1;\lambda_1) = 0$ as well as the decay
of $v(x;\lambda_1)$, $\partial_x v(x;\lambda_1)$, $\tilde{v}(x)$, and $\tilde{v}'(x)$
to zero as $x \to \infty$. Combining (\ref{root-der-eq}) and (\ref{der-vv-eq}) yields
\begin{equation}
\label{zeroshift}
(\partial_x v(x_1;\lambda_1) )^2 x_0'(\lambda_1) = \int_{x_1}^{\infty} v(x;\lambda_1)^2 dx > 0,
\end{equation}
so that $x_0'(\lambda_1) > 0$ follows from the fact that $\partial_x v(x_1;\lambda_1) \neq 0$.
\end{proof}

\begin{figure}[htb]
\begin{center}
\includegraphics[height=12.cm,width=14.cm]{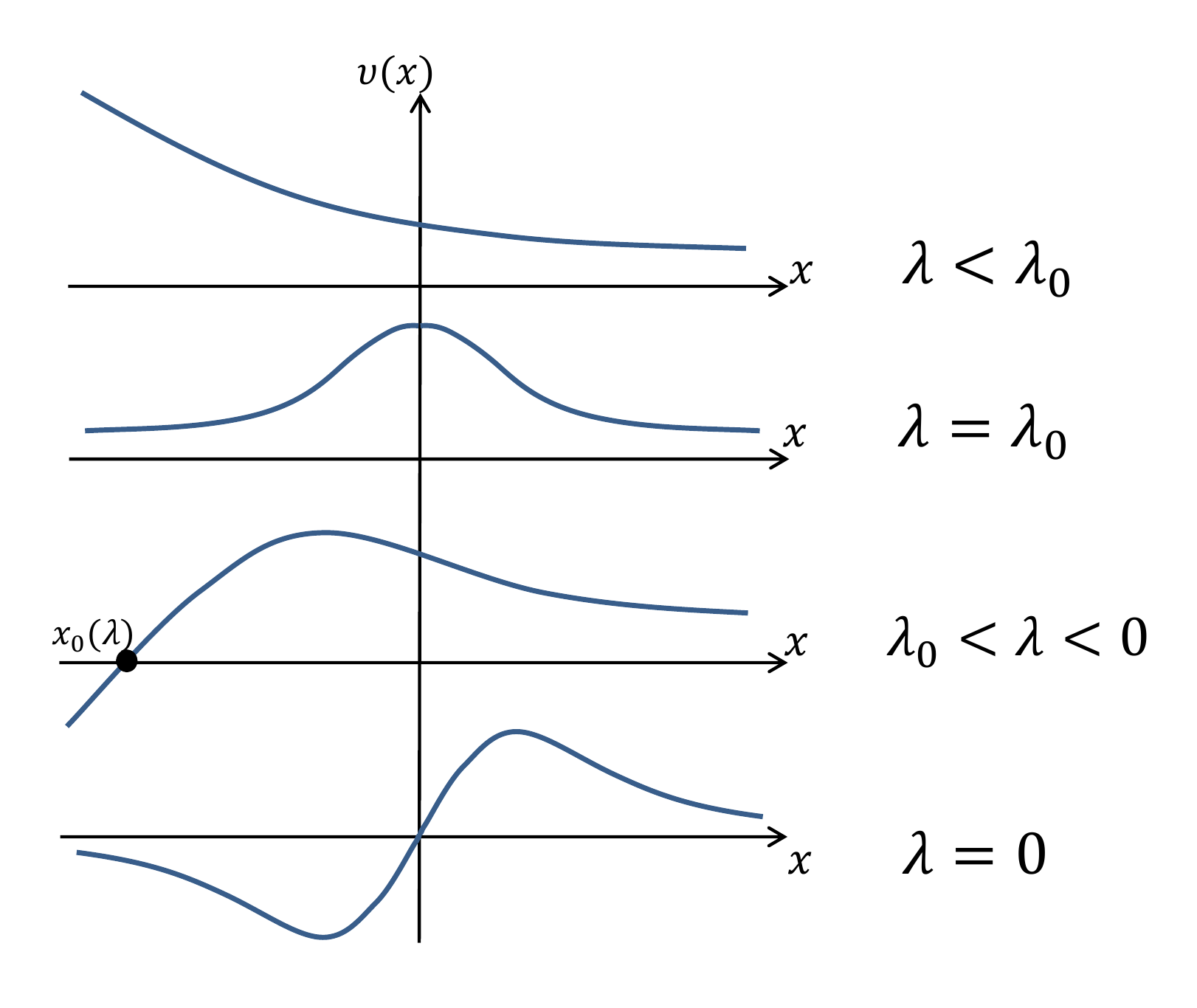}
\end{center}
\caption{Profiles of the solution $v$ of the differential equation (\ref{eqofcomp}) with
the limit (\ref{u-limit}).}
\label{fig-3}
\end{figure}

The results of Lemmas \ref{solhyp}, \ref{lem-Sturm}, and \ref{Sturm} are illustrated
on Figure \ref{fig-3} which shows profile of the solution $v$ satisfying the limit (\ref{u-limit})
for four cases of $\lambda$ in $(-\infty,0]$. The even eigenfunction for $\lambda_0 < 0$
and the odd eigenfunction for $\lambda = 0$ correspond to the solutions of the Schr\"{o}dinger
equation defined in $L^2(\mathbb{R})$. The only zero $x_0(\lambda)$ of $v$ appears from
negative infinity at $\lambda = \lambda_0$ and it is a monotonically increasing function
of $\lambda$ in $(\lambda_0,0)$ such that $x_0(0) = 0$.

By using the results of Lemmas \ref{solhyp}, \ref{lem-Sturm}, and \ref{Sturm}, we can address eigenvalues
of $\sigma_p(L_+)$ in $(-\infty,1)$, where the differential expression for $L_+$ is given by (\ref{Lplus}).

\begin{lemma}
\label{lem-eig}
Let $v$ be the solution defined by Lemma \ref{solhyp}. For every $a \in \mathbb{R}$,
$\lambda_0 \in (-\infty, 1)$ is an eigenvalue of $\sigma_p(L_+)$ if and only if one of the following equations holds: \\
(a) $v(a) = 0$,\\
(b) $v(-a) = 0$,\\
(c) $v(-a)v'(a)+v(a)v'(-a)=0$.\\
Moreover, $\lambda_0 \in \sigma_p(L_+)$ has multiplicity $K-1$ in the case (a), $N-K-1$
in the case (b), and is simple in the case (c). If $\lambda_0$ satisfies several cases, then its multiplicity
is the sum of the multiplicities in each case.
\end{lemma}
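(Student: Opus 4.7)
The plan is to reduce the eigenvalue problem $L_+U=\lambda U$ on $\Gamma$ to an algebraic problem for the coefficients of the decaying solutions on the individual edges, and then read off the dimension of the kernel of the resulting linear system. First, on each edge $j\leq K$ the eigenvalue equation reads $-u_j''+u_j-(2p+1)(p+1)\alpha_j^2\phi_j^{2p}u_j=\lambda u_j$; since $\alpha_j^2\phi_j^{2p}=\sech^2(p(x+a))$, this is exactly equation (\ref{shiftedeqofcomp}). The substitution $u_j(x)=c_jv(x+a)$ together with Lemma \ref{solhyp} shows that the only $L^2(\mathbb{R}^+)$ solution is a scalar multiple of $v(\cdot+a)$. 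The analogous statement, with $v(\cdot-a)$, holds on edges $j\geq K+1$. Thus an eigenfunction is parameterized by the constants $(c_1,\dots,c_N)\in\mathbb{R}^N$, and its boundary values at the vertex are
\[
u_j(0)=c_jv(a),\quad u_j'(0)=c_jv'(a)\quad(j\leq K),\qquad u_k(0)=c_kv(-a),\quad u_k'(0)=c_kv'(-a)\quad(k>K).
\]

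Next I would impose the continuity condition from (\ref{H1}) and the generalized Kirchhoff condition from (\ref{H2}). The continuity condition says $\alpha_j^{1/p}c_jv(a)=\alpha_k^{1/p}c_kv(-a)=:\beta$ for every $j\leq K$ and $k>K$. The Kirchhoff condition becomes
\[
v'(a)\sum_{j=1}^{K}\alpha_j^{-1/p}c_j+v'(-a)\sum_{k=K+1}^{N}\alpha_k^{-1/p}c_k=0.
\]
A case analysis on $v(a)$ and $v(-a)$ then splits the problem into the three alternatives (a), (b), and (c) of the lemma. In case (c), when both $v(a)$ and $v(-a)$ are nonzero, all $c_j$ are determined from $\beta$ via $c_j=\beta\alpha_j^{-1/p}/v(a)$ and $c_k=\beta\alpha_k^{-1/p}/v(-a)$, so the Kirchhoff relation reduces, after factoring out $\beta$ and invoking constraint (\ref{alpha_const}) to match the two sums $\sum\alpha_j^{-2/p}$, to the single scalar equation $v(-a)v'(a)+v(a)v'(-a)=0$; the eigenspace is then one-dimensional.

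In case (a), $v(a)=0$ forces $\beta=0$, hence $c_k=0$ for $k>K$ since $v(-a)\neq 0$; the Kirchhoff condition becomes the single linear relation $\sum_{j=1}^{K}\alpha_j^{-1/p}c_j=0$ on the $K$ free parameters $c_1,\dots,c_K$ (using that $v'(a)\neq 0$, which follows from uniqueness for the second-order ODE since $v\not\equiv 0$), giving multiplicity $K-1$. Case (b) is symmetric and yields $N-K-1$. Additivity of multiplicities when two cases coincide is automatic from this framework, since one then recovers a linear system whose solution space splits as the direct sum of the component subspaces.

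The only subtle point is making sure that the ``constraint'' (\ref{alpha_const}) is the exact identity that collapses the Kirchhoff condition in case (c) to a single relation on $v(\pm a)$ and $v'(\pm a)$; without it one would obtain a two-parameter condition rather than the scalar equation stated. I do not expect a genuine obstacle beyond careful bookkeeping of the coefficients $\alpha_j^{\pm 1/p}$ and the sign conventions in the substitution $u_j(x)=c_jv(x\pm a)$; all ingredients—the uniqueness of the decaying solution from Lemma \ref{solhyp}, the nonvanishing of $v'$ at a simple zero of $v$, and the constraint (\ref{alpha_const})—are already in place.
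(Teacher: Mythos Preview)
Your reduction to a linear system on $(c_1,\dots,c_N)$ via Lemma~\ref{solhyp} is exactly the paper's starting point, and your case analysis reaches the same three conditions with the same multiplicities. The paper takes a slightly different computational route: it writes out the $N\times N$ coefficient matrix for the continuity and Kirchhoff constraints explicitly, reduces it to lower-triangular form by elementary column operations, and reads off the determinant as a product containing the factors $v(a)^{K-1}$, $v(-a)^{N-K-1}$, and $v(-a)v'(a)+v(a)v'(-a)$; only afterward does it perform the case analysis to extract the multiplicities. Your approach is more economical, since the case split on whether $v(\pm a)$ vanish already yields both the existence condition and the kernel dimension in one pass, without ever assembling the matrix. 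One small caution: your additivity remark (``direct sum of the component subspaces'') is slightly loose---when $v(a)=v(-a)=0$ all three conditions (a), (b), (c) hold simultaneously and the eigenspace has dimension $N-1=(K-1)+(N-K-1)+1$, but this is verified by a direct count (continuity becomes vacuous, Kirchhoff is a single relation on $N$ free coefficients) rather than by any natural direct-sum decomposition into the three ``component'' eigenspaces.
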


\begin{proof}
Let $U \in H^2_{\Gamma}$ be the eigenvector of the operator $L_+$ for the eigenvalue
$\lambda_0 \in \sigma_p(L_+)$. By Sobolev embedding of $H^2(\mathbb{R}^+)$ into
$C^1(\mathbb{R}^+)$, both $U(x)$ and $U'(x)$ decay to zero as $x \to +\infty$.
By using the representation (\ref{soliton-shifted-II})
and the transformation of (\ref{shiftedeqofcomp}) to (\ref{eqofcomp}),
we can write $U = (u_1,\dots,u_N)^T$ in the form
$$
u_j(x) = \begin{cases} c_j v(x+a), & j=1,\dots,K, \\
c_j v(x-a), & j=K+1,\dots,N,
\end{cases}
$$
where $(c_1,c_2,\dots,c_N)$ are coefficients and $v$ is the solution defined in Lemma \ref{solhyp}.
The boundary conditions for $U \in H^2_{\Gamma}$ in (\ref{H1}) and (\ref{H2})
imply the homogeneous linear system on the coefficients on $(c_1,c_2,\dots,c_N)$:
\begin{equation}
\label{lin-system-1}
c_1 \alpha_1^{1/p} v(a) = \dots = c_K \alpha_K^{1/p} v(a) =
c_{K+1} \alpha_{K+1}^{1/p} v(-a) = \dots = c_N \alpha_N^{1/p} v(-a)
\end{equation}
and
\begin{equation}
\label{lin-system-2}
\sum_{j=1}^{K} c_j \alpha_j^{-1/p} v'(a) + \sum_{j=K+1}^N c_j \alpha_j^{-1/p} v'(-a) = 0.
\end{equation}
The associated matrix is{\small$$
\bordermatrix{ & & & & & & & & & \cr
&\alpha_1^{1/p} v(a) & -\alpha_2^{1/p} v(a) & 0 & \dots & 0 & 0 & \dots & 0 & 0 \cr
&\alpha_1^{1/p} v(a) & 0 & -\alpha_3^{1/p} v(a) & \dots & 0 & 0 & \dots & 0 & 0 \cr
&\vdots & \vdots & \vdots & \ddots & \vdots & \vdots & \ddots & \vdots & \vdots \cr
&\alpha_1^{1/p} v(a) & 0 & 0 & \dots & -\alpha_{K}^{1/p} v(a) & 0 & \dots & 0 & 0 \cr
&\alpha_1^{1/p} v(a) & 0 & 0 & \dots & 0 & -\alpha_{K+1}^{1/p} v(-a) & \dots & 0 & 0 \cr
&\vdots & \vdots & \vdots & \ddots & \vdots & \vdots & \ddots & \vdots & \vdots \cr
&\alpha_1^{1/p} v(a) & 0 & 0 & \dots & 0 & 0 & \dots & 0 & -\alpha_{N}^{1/p} v(-a) \cr
& b_1 & b_2 & b_3 & \dots & b_K & b_{K+1} & \dots & b_{N-1} & b_N \cr
}$$}
where
$$
b_j = \left\{ \begin{array}{l} \alpha_j^{-1/p} v'(a), \quad 1 \leq j \leq K \\
\alpha_j^{-1/p} v'(-a), \quad K+1 \leq j \leq N. \end{array} \right.
$$

In order to calculate the determinant of the associate matrix, we perform elementary column operations and
obtain a lower triangular matrix. Let the associate matrix be of the form
$[A^{0}_1  A^{0}_2 \dots A^{0}_N]$, where $A_j^{0}$ represents
the $j$-th column of the matrix in the beginning of the algorithm.
Then, we perform the following elementary column operations:
$$
[A^{0}_1 \; A^{0}_2 \; A^{0}_3 \; \dots \; A^{0}_N] \longrightarrow
[A^{1}_1 \; A^{1}_2 \; A^{1}_3 \; \dots \; A^{1}_N] :=
[A^{0}_1 \; A^{0}_2 + \alpha^{-1/p}_1 \alpha^{1/p}_2 A^{0}_1 \; A^{0}_3 \; \dots \; A^{0}_N],
$$
then
$$
[A^{1}_1 \; A^{1}_2 \; A^{1}_3 \; \dots \; A^{1}_N] \longrightarrow
[A^{2}_1 \; A^{2}_2 \; A^{2}_3 \; \dots \; A^{2}_N] :=
[A^{1}_1 \; A^{1}_2 \; A^{1}_3 + \alpha^{-1/p}_2 \alpha^{1/p}_3 A^{1}_2 \; \dots \; A^{1}_N],
$$
and so on, until the $K$-th step. At the $K$-th step, we need to take into account
the change of $v(a)$ to $v(-a)$ in the $K+1$-th column, hence the $K$-th step involves
$$
A^{K-1}_{K+1} \longrightarrow A^{K}_{K+1} := A^{K-1}_{K+1} + \frac{\alpha^{1/p}_{K+1} v(-a)}{\alpha^{1/p}_K v(a)} A^{K-1}_K.
$$
At the $(K+1)$-th and subsequent steps, no further changes of $v(-a)$ occurs, so that
we apply the same rule as the one before the $K$-th step in all subsequent transformations.
Finally, after $(N-1)$ transformations, we obtain a lower triangular matrix in the form:
{\small$$
\bordermatrix{ & & & & & & & & & \cr
& \alpha_1^{1/p} v(a) & 0 & 0 & \dots & 0 & 0 & \dots & 0 & 0 \cr
& \alpha_1^{1/p} v(a) & \alpha_2^{1/p} v(a) & 0 & \dots & 0 & 0 & \dots & 0 & 0 \cr
& \alpha_1^{1/p} v(a) & \alpha_2^{1/p} v(a) & \alpha_3^{1/p} v(a) & \dots & 0 & 0 & \dots & 0 & 0 \cr
& \vdots & \vdots & \vdots & \ddots & \vdots & \vdots & \ddots & \vdots & \vdots \cr
& \alpha_1^{1/p} v(a) & \alpha_2^{1/p} v(a) & \alpha_3^{1/p} v(a) & \dots & \alpha_{K}^{1/p} v(a) & 0 & \dots & 0 & 0 \cr
& \alpha_1^{1/p} v(a) & \alpha_2^{1/p} v(a) & \alpha_3^{1/p} v(a) & \dots & \alpha_{K}^{1/p} v(a) & \alpha_{K+1}^{1/p} v(-a) & \dots & 0 & 0 \cr
& \vdots & \vdots & \vdots & \ddots & \vdots & \vdots & \ddots & \vdots & \vdots \cr
& \alpha_1^{1/p} v(a) & \alpha_2^{1/p} v(a) & \alpha_3^{1/p} v(a) & \dots & \alpha_{K}^{1/p} v(a) & \alpha_{K+1}^{1/p} v(-a) & \dots &
\alpha_{N-1}^{1/p} v(-a) & 0 \cr
& B_1 & B_2 & B_3 & \dots & B_K & B_{K+1} & \dots & B_{N-1} & B_N \cr
}
$$}
where $\{ B_j \}_{j = 1}^N$ are some numerical coefficients, in particular, $B_1 = \alpha_1^{-1/p} v'(a)$ and
$$
B_N = \frac{\alpha_N^{1/p}}{v(a)} \left[ \sum_{j=1}^K \alpha_j^{-2/p} v'(a) v(-a)
+ \sum_{j=K+1}^N \alpha_j^{-2/p} v'(-a) v(a)  \right].
$$
Under the constraint (\ref{alpha_const}), the determinant of the lower triangular matrix
is evaluated in the form:
$$
\Delta = \left( \prod_{j=1}^N \alpha_j^{1/p} \right)
\left( \sum_{j=1}^K \alpha_j^{-2/p} \right)
v(a)^{K-1} v(-a)^{N-K-1} \left[ v(-a)v'(a)+v(a)v'(-a) \right].
$$
Therefore, $U \neq 0$ is the eigenvector of $L_+$ for the eigenvalue $\lambda_0 \in (-\infty,1)$
if and only if $\Delta = 0$, or equivalently, if either $v(a) = 0$ or $v(-a) = 0$ or $v(-a)v'(a)+v(a)v'(-a)=0$.

In the case of $v(a) = 0$ and $v(-a) \neq 0$, it follows from the linear system (\ref{lin-system-1}) that
$c_j = 0$ for all $K+1 \leq j \leq N$ and $c_j \in \mathbb{R}$ are arbitrary
for all $1 \leq j \leq K$. The linear equation (\ref{lin-system-2})
implies that $\sum_{j=1}^{K} c_j \alpha_j^{-1/p} = 0$, since $v'(a) \neq 0$ when $v(a)=0$.
Therefore, the eigenvalue $\lambda_0$ has a multiplicity $K-1$.

Similarly, the eigenvalue $\lambda_0$ has a multiplicity $N-K-1$ if $v(a) \neq 0$ and $v(-a) = 0$.

In the case $v(-a)v'(a)+v(a)v'(-a) = 0$ but $v(a) \neq 0$ and $v(-a) \neq 0$, the linear system (\ref{lin-system-1})
implies that all coefficients are related to one coefficient. The linear equation (\ref{lin-system-2}) is then
satisfied due to the constraint (\ref{alpha_const}) and $\lambda_0$ is a simple eigenvalue.

If several cases are satisfied simultaneously, then it follows from the linear system (\ref{lin-system-1}) and (\ref{lin-system-2})
that multiplicity of $\lambda_0$ is equal to the sum of the multiplicities for each of the cases.
\end{proof}

\begin{proof1}{\em of Theorem \ref{theorem-eigenvalues}.}
The result on $\sigma_p(L_-)$ is proved from Lemma 3.1 in \cite{KP}.
The construction of $\sigma_p(L_+)$ follows from Lemmas \ref{lem-Sturm}, \ref{Sturm}, and \ref{lem-eig},
as well as the continuation arguments.

The condition (c) in Lemma \ref{lem-eig} is satisfied if the solution $v$ in Lemma \ref{solhyp}
is either odd or even function of $a$. For the simple eigenvalue $\lambda_0 < 0$ in Lemma \ref{lem-Sturm},
the eigenfunction is even and positive. Hence, $v(a) \neq 0$ and $v(-a) \neq 0$,
so that $\lambda_0$ is a simple eigenvalue in $\sigma_+(L_+)$
by the case (c) in Lemma \ref{lem-eig}. The corresponding eigenvector $U \in H^2_{\Gamma}$ is strictly positive
definite on $\Gamma$.

For the simple zero eigenvalue in Lemma \ref{lem-Sturm}, the eigenfunction (\ref{derivative-mode})
is odd and positive on $(-\infty,0)$.
Since $v(a) \neq 0$ and $v(-a) \neq 0$ if $a \neq 0$.
then $0$ is a simple eigenvalue in $\sigma_+(L_+)$ by the case (c) in Lemma \ref{lem-eig}. The corresponding eigenvector
$U \in H^2_{\Gamma}$ can be represented in the form:
\begin{equation}
\label{zero-eigenvector}
U(x) = \begin{cases} \alpha_j^{-1/p} \phi'(x+a), & j=1,\dots,K \\
- \alpha_{j}^{-1/p} \phi'(x-a), & j = K+1,\dots,N
\end{cases}.
\end{equation}
which represent the translation of the shifted state (\ref{soliton-shifted-II})
with respect to parameter $a$.

No other values of $\lambda_0$ exists in $(-\infty,\lambda_2)$ such that
the condition (c) in Lemma \ref{lem-eig} is satisfied, where $\lambda_2 > 0$
is either the positive eigenvalue of the scalar Schr\"{o}dinger equation (\ref{eqofcomp})
or the bottom of $\sigma_c(L_+)$ at $\lambda_2 = 1$.

If $a > 0$, then we claim that $v(a) > 0$ for every $\lambda \in (-\infty,0]$.
Indeed, by Lemma \ref{Sturm}, simple zeros of $v$ are monotonically increasing functions of $\lambda$,
whereas no multiple zeros of $v$ may exist for nonzero solutions of the second-order differential equations.
Since the only zero of $v$ bifurcates from $x = -\infty$ at $\lambda = \lambda_0 < 0$
and reaches $x = 0$ at $\lambda = 0$, $v(x)$ remains positive for every $x > 0$ for $\lambda \in (-\infty,0]$.
Hence the condition (a) in Lemma \ref{lem-eig} is not satisfied for every $\lambda \in (-\infty,0]$.

We now consider vanishing of $v(-a)$ for $a > 0$ for the condition (b) in Lemma \ref{lem-eig}.
By the same continuation argument from Lemma \ref{Sturm},
there exists exactly one $\lambda_1 \in (\lambda_0,0)$ such that $v(-a) = 0$ for any given $a > 0$.
Since $v'(-a) \neq 0$ and $v(a) \neq 0$, $\lambda_1$ is an eigenvalue of $\sigma_p(L_+)$
of multiplicity $N-K-1$.

For $a < 0$, the roles of cases (a) and (b) are swaped. The condition (b) is never satisfied,
while the condition (a) is satisfied for exactly one $\lambda_1 \in (\lambda_0,0)$,
which becomes an eigenvalue of $\sigma_p(L_+)$ of multiplicity $K-1$.
The assertion of Theorem \ref{theorem-eigenvalues} is proved.
\end{proof1}

\begin{remark}
For $p = 1$, the solution $v$ in Lemma \ref{solhyp} is available in the closed analytic form:
$$
v(x) = e^{-\sqrt{1-\lambda}x} \frac{3-\lambda + 3 \sqrt{1-\lambda} \tanh x - 3 \sech^2 x}{3 - \lambda + 3 \sqrt{1-\lambda}}.
$$
In this case, $\lambda_0 = -3$ is a simple eigenvalue corresponding to $v(x) = \frac{1}{4} \sech^2 x$
and $0$ is a simple eigenvalue corresponding to $v(x) = \frac{1}{2} \tanh x \sech x$.
If $a \neq 0$, the negative eigenvalue $\lambda_1 \in (\lambda_0,0)$ in
the proof of Theorem \ref{theorem-eigenvalues} is given by the root of the following transcendental equation
$$
3 - \lambda - 3 \sqrt{1-\lambda} \tanh|a| - 3 {\rm sech}^2(a) = 0,
$$
or explicitly, by
$$
\lambda_1 = -\frac{3}{2} \tanh|a| \left[ \tanh|a| + \sqrt{1+3 \sech^2(a)} \right],
$$
We note that $\lambda_1 \to 0$ when $a \to 0$ and $\lambda_1 \to \lambda_0 = -3$ when $|a| \to \infty$.
\end{remark}

\begin{remark}
By the count of Theorem \ref{theorem-eigenvalues}, the Morse index of $L_+$ is $K$ if $a < 0$
and $N - K$ if $a > 0$. On the other hand, the Sturm index (defined as the number of nodes
for the eigenfunction $U$ in (\ref{zero-eigenvector}) corresponding to the eigenvalue $\lambda = 0$)
is $K$ if $a < 0$ and $N-K$ if $a > 0$. Hence, the two indices are equal to each other,
similarly to the Sturm's nodal count for finite intervals or bounded graphs.
\end{remark}

\section{Homogenization of the star graph}

The translational symmetry in the star graph $\Gamma$ is broken due to the vertex at $x = 0$.
As a result, a momentum functional is not generally conserved under the NLS flow.
However, we will show here that if the coefficients $(\alpha_1,\alpha_2,\dots,\alpha_N)$ satisfy the
constraint (\ref{alpha_const}), then there exist solutions to the NLS equation (\ref{eq1}),
for which the following momentum functional is conserved:
\begin{equation}
\label{momentum}
P(\Psi) := \sum_{j=1}^N (-1)^{m_j} \int_{\mathbb{R}^+} {\rm Im} \left( \psi'_j \overline{\psi}_j \right) dx,
\end{equation}
where the $N$-tuple $(m_1, m_2, \dots, m_N)$ is given by (\ref{m_j}).
Computations yields the following momentum balance equation.

\begin{lemma}
\label{lemma-momentum}
For every $p > 0$ and every $(\alpha_1,\alpha_2,\dots,\alpha_N)$ satisfying the constraint (\ref{alpha_const}),
the local solution (\ref{solution-in-H1}) in Lemma \ref{lemma-wellposedness} satisfies the momentum balance equation
\begin{equation}
\label{dPdt}
\frac{d}{dt} P(\Psi) = \sum_{j=1}^N (-1)^{m_j} |\psi_j'(0)|^2.
\end{equation}
for all $t \in (-t_0,t_0)$, where $P$ is given by (\ref{momentum}).
\end{lemma}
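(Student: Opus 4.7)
The plan is to mirror the proof of Lemma \ref{lemma-wellposedness}: verify the identity first for strong solutions with compatible boundary data, then extend by approximation. Assume $p \geq 1$ and $\Psi(0) \in H^3_\Gamma$, so that the strong solution satisfies $\Psi(t) \in C((-t_0,t_0), H^3_\Gamma) \cap C^1((-t_0,t_0), H^1_\Gamma)$ by (\ref{solution-in-H3}) and the boundary values $\psi_j(0,t)$, $\psi_j'(0,t)$, $\psi_j''(0,t)$ are continuous in $t$. I can then differentiate $I_j := \int_{\mathbb{R}^+} {\rm Im}(\psi_j'\overline{\psi_j})\,dx$ under the integral sign.

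For each edge, substitute $i\partial_t\psi_j = -\psi_j'' - (p+1)\alpha_j^2 |\psi_j|^{2p}\psi_j$ into $\partial_t(\psi_j'\overline{\psi_j})$ and organize the result as a total $x$-derivative:
\begin{equation*}
{\rm Im}\,\partial_t(\psi_j'\overline{\psi_j}) = \partial_x \bigl( {\rm Re}[\psi_j''\overline{\psi_j}] - |\psi_j'|^2 + p\alpha_j^2 |\psi_j|^{2p+2} \bigr).
\end{equation*}
Integrating over $\mathbb{R}^+$ and using the decay of $\psi_j,\psi_j',\psi_j''$ at infinity gives
\begin{equation*}
\frac{dI_j}{dt} = |\psi_j'(0)|^2 - {\rm Re}[\psi_j''(0)\overline{\psi_j(0)}] - p\alpha_j^2 |\psi_j(0)|^{2p+2}.
\end{equation*}
Summing against $(-1)^{m_j}$ from (\ref{m_j}) produces the target $\sum_j(-1)^{m_j}|\psi_j'(0)|^2$, and the two remaining boundary sums must cancel. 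For these I would use that the continuity condition in $H_\Gamma^1$ forces $\psi_j(0,t) = \alpha_j^{-1/p} c(t)$ for a common scalar $c(t)$, while the compatibility condition in $H_\Gamma^3$ forces $\psi_j''(0,t) = \alpha_j^{-1/p} d(t)$ for a common scalar $d(t)$. Both sums then factor as real multiples of $\sum_j (-1)^{m_j} \alpha_j^{-2/p}$, which vanishes by the constraint (\ref{alpha_const}) together with either choice of signs in (\ref{m_j}). This establishes (\ref{dPdt}) pointwise for $H^3_\Gamma$ strong solutions.

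Finally, the extension to arbitrary $p > 0$ and $\Psi(0) \in H^1_\Gamma$ proceeds by approximating $\Psi(0)$ in $H^1_\Gamma$ by a sequence in $H^3_\Gamma$, writing the identity in its integrated form $P(\Psi(t)) - P(\Psi(0)) = \int_0^t \sum_j (-1)^{m_j}|\psi_j'(0,s)|^2\,ds$ for each approximation, and passing to the limit using the continuous dependence of the local $H^1_\Gamma$ flow together with the same approximation devices invoked at the end of the proof of Lemma \ref{lemma-wellposedness}. The main obstacle is that the pointwise trace $\psi_j'(0,s)$ is not a continuous functional on $H^1_\Gamma$, so passing to the limit on the right-hand side requires either local smoothing from the Schr\"odinger semigroup $e^{it\Delta}$ or a Strichartz-type space-time bound; once such a bound is secured, the pointwise form (\ref{dPdt}) follows from the integrated version by differentiating in $t$.
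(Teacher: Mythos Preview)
Your proposal is correct and follows essentially the same route as the paper: compute for $H^3_\Gamma$ strong solutions, collect boundary terms via integration by parts, and cancel the $\psi_j''(0)\overline{\psi_j(0)}$ and $|\psi_j(0)|^{2p+2}$ contributions using the $H^1_\Gamma$ and $H^3_\Gamma$ vertex conditions together with the constraint (\ref{alpha_const}) and the sign pattern (\ref{m_j}). The paper handles the extension to general $p>0$ and $H^1_\Gamma$ data by a one-line appeal to ``standard approximation techniques'' in \cite{Caz}, without confronting the trace issue you flag; your observation that $\psi_j'(0,\cdot)$ is not an $H^1_\Gamma$-continuous functional is well taken, but the paper does not supply more detail here either.
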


\begin{proof}
If $p \geq 1$, we can consider the smooth solutions (\ref{solution-in-H3}) to the NLS equation (\ref{eq1}).
The momentum balance equation for $P$ in (\ref{momentum}) can be written in the form:
\begin{equation}
\label{momentum_t}
\frac{d}{dt} P(\Psi)  =  \sum_{j=1}^N (-1)^{m_j} \int_{\mathbb{R}^+}
{\rm Im} \left( \psi'_j \partial_t \overline{\psi}_j + \overline{\psi}_j \partial_t \psi'_j \right) dx.
\end{equation}
By using the formulas for $\partial_t \psi_j$ and $\partial_t \psi'_j$ obtained from the NLS equation (\ref{eq1}),
we can simplify the momentum balance equation (\ref{momentum_t}) and verify the following chain of equations
with the integration by parts technique:
\begin{eqnarray*}
\frac{d}{dt} P(\Psi) & = & \sum_{j=1}^N (-1)^{m_j} \int_{\mathbb{R}^+} {\rm Re} \left( \overline{\psi}_j \psi'''_j
- \psi'_j \overline{\psi}''_j + p\alpha^2_j (|\psi_j|^{2p+2})' \right) dx \\
& = & \sum_{j=1}^N (-1)^{m_j} \left( - {\rm Re}[\overline{\psi}_j(0) \psi''_j(0)]
+ |\psi'_j(0)|^2 - p \alpha^2_j |\psi_j(0)|^{2p+2} \right),
\end{eqnarray*}
where the decay of $\Psi(x)$, $\Psi'(x)$, and $\Psi''(x)$ to zero at infinity has been used
for the solution in $H^3_{\Gamma}$. Applying the boundary conditions in (\ref{H1}) and (\ref{H3}),
the constraint (\ref{alpha_const}), and the choice of values of $m_j$ in (\ref{m_j})
yields the momentum balance equation in the form (\ref{dPdt}).

Although our derivation was restricted to the case $p \geq 1$ and to solutions in $H^3_{\Gamma}$,
the proof can be extended to the local solution (\ref{solution-in-H1}) for
all values of $p > 0$ by using standard approximation techniques \cite{Caz}.
\end{proof}

The momentum $P(\Psi)$ is conserved in $t$ if the boundary
conditions for derivatives satisfy the additional constraints:
\begin{equation}
\label{assumption}
(-1)^{m_1} \alpha_1^{1/p} \psi_1'(0) = (-1)^{m_2} \alpha_2^{1/p} \psi_2'(0) = \dots = (-1)^{m_N} \alpha_N^{1/p} \psi_N'(0),
\end{equation}
which are compatible with the boundary conditions in (\ref{H2}) under the constraint (\ref{alpha_const}).
Under the constraint (\ref{assumption}), we verify from equation (\ref{dPdt}) that
$$
\frac{d}{dt} P(\Psi) = (-1)^{m_1} \alpha_1^{2/p} |\psi_1'(0)|^2
\left( \sum_{j=1}^K \frac{1}{\alpha_j^{2/p}} - \sum_{j=K+1}^N \frac{1}{\alpha_j^{2/p}} \right) = 0,
$$
hence $P(\Psi)$ is conserved in $t$.

In order to make sure that the constraint (\ref{assumption}) is satisfied for every $t$,
we observe the following reduction of the NSL equation (\ref{eq1}) on the star
graph $\Gamma$ to the homogeneous NLS equation on the infinite line.

\begin{lemma}
Under the constraint (\ref{alpha_const}), there exist solutions
of the NLS equation (\ref{eq1}) on the graph $\Gamma$ which satisfy the
the following homogeneous NLS equation on the infinite line:
\begin{equation}
\label{nls}
iU_t + U_{xx} + (p+1)\left| U \right|^{2p}U = 0, \quad x \in \mathbb{R}, \quad t \in \mathbb{R},
\end{equation}
where $U = U(t,x) \in \mathbb{C}$.
\end{lemma}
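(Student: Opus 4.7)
The plan is to produce an explicit \emph{reduction} that lifts an arbitrary solution of (\ref{nls}) to a solution of (\ref{eq1}) on $\Gamma$. Given $U = U(t, x)$ solving (\ref{nls}), I would define $\Psi = (\psi_1, \ldots, \psi_N)^T$ by the ansatz
\begin{equation*}
\psi_j(t,x) := \alpha_j^{-1/p}\, U\bigl(t,\, (-1)^{m_j} x\bigr), \qquad x \in \mathbb{R}^+,\ j = 1,\dots,N,
\end{equation*}
with $(m_1, \ldots, m_N) \in \{0,1\}^N$ chosen as in (\ref{m_j}), and then verify that $\Psi$ lies in the domain and solves (\ref{eq1}) on each edge.

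First I would check the two boundary conditions at the vertex. The continuity part of (\ref{H1}) is automatic since $\alpha_j^{1/p} \psi_j(t,0) = U(t,0)$ is the same for every $j$. Differentiating the ansatz in $x$ gives $\psi_j'(t,0) = (-1)^{m_j} \alpha_j^{-1/p} U_x(t,0)$, hence
\begin{equation*}
\sum_{j=1}^N \alpha_j^{-1/p}\,\psi_j'(t,0) \;=\; U_x(t,0) \sum_{j=1}^N (-1)^{m_j} \alpha_j^{-2/p},
\end{equation*}
and this vanishes identically because the sign pattern in (\ref{m_j}) turns the right-hand sum into precisely the difference that is equal to zero by the constraint (\ref{alpha_const}). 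Thus $\Psi(t,\cdot) \in H_\Gamma^2$ whenever $U(t,\cdot) \in H^2(\mathbb{R})$, and analogously for $H_\Gamma^1$.

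Next I would verify the PDE on each edge. Since the sign $(-1)^{m_j}$ is squared in the second derivative, and since $\alpha_j^2 \cdot \alpha_j^{-(2p+1)/p} = \alpha_j^{-1/p}$, substitution of the ansatz into (\ref{eq1}) yields, on the $j$-th edge,
\begin{equation*}
i \partial_t \psi_j + \psi_j'' + (p+1)\alpha_j^2 |\psi_j|^{2p}\psi_j \;=\; \alpha_j^{-1/p}\bigl[\,iU_t + U_{xx} + (p+1)|U|^{2p} U\,\bigr]\bigl(t,(-1)^{m_j}x\bigr) \;=\; 0
\end{equation*}
by (\ref{nls}). Combining with the boundary step above, $\Psi$ solves the full NLS on $\Gamma$ in the sense of Lemma \ref{lemma-wellposedness}.

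There is no substantial obstacle here — the proof is essentially a verification — but the conceptual point worth emphasizing is that the exponent $1/p$ appearing in the domain definitions (\ref{H1}) and (\ref{H2}) and in the constraint (\ref{alpha_const}) is tuned precisely so that the single uniform rescaling $\psi_j = \alpha_j^{-1/p}U$ simultaneously absorbs the $\alpha_j^2$ coefficient of the nonlinearity, respects the continuity at the vertex, and turns the derivative jump condition into exactly the balance (\ref{alpha_const}). This is the reason the two examples (i) and (ii) in the paper are singled out, and it is consistent with the shifted states of Lemma \ref{solutions-II}, which correspond to lifting the line soliton $U(t,x) = e^{it}\phi(x-a)$ through this same ansatz.
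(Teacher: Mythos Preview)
Your proof is correct and uses essentially the same correspondence as the paper, just in the reverse direction: the paper starts from a graph solution $\Psi$ satisfying the reduction (\ref{Rsolutions})--(\ref{Rboundary}) and defines $U$ via (\ref{Ufunction}), while you start from a line solution $U$ and lift it to $\Psi$ by the inverse ansatz. The verification of the boundary conditions and the edgewise PDE is the same in either direction.
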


\begin{proof}
The class of suitable solutions $\Psi$ to the NLS equation (\ref{eq1}) on the star graph $\Gamma$
must satisfy the following reduction:
\begin{equation}
\label{Rsolutions}
\left\{ \begin{array}{l}
\alpha_1^{1/p} \psi_1(t,x) = \dots = \alpha_K^{1/p} \psi_K(t,x), \\
\alpha_{K+1}^{1/p} \psi_{K+1}(t,x) = \dots = \alpha_N^{1/p} \psi_N(t,x), \end{array} \right.
\quad x \in \mathbb{R}^+, \quad t \in \mathbb{R},
\end{equation}
subject to the boundary conditions at the vertex point $x = 0$:
\begin{equation}
\label{Rboundary}
\alpha_K^{1/p} \psi_K(t,0) = \alpha_{K+1}^{1/p} \psi_{K+1}(t,0), \quad
\alpha_K^{1/p} \partial_x \psi_K(t,0) = - \alpha_{K+1}^{1/p} \partial_x \psi_{K+1}(t,0).
\end{equation}
Note that the boundary conditions are compatible with the generalized Kirchhoff boundary conditions in
(\ref{H2}) under the constraint (\ref{alpha_const}). Thanks to the reduction (\ref{Rsolutions}),
the following function can be defined on the infinite line:
\begin{equation}
\label{Ufunction}
U(t,x) := \begin{cases}
\alpha_j^{1/p} \psi_j(t,-x), \quad 1 \leq j \leq K, \qquad x \in \mathbb{R}^-, \\
\alpha_j^{1/p} \psi_j(t,x), \quad K+1 \leq j \leq N, \quad x \in \mathbb{R}^+.
\end{cases}
\end{equation}
Thanks to the boundary conditions (\ref{Rboundary}), $U$ is a $C^1$ function across $x = 0$.
Substitution (\ref{Ufunction}) into the NLS equation (\ref{eq1}) on the graph $\Gamma$
yields the homogeneous NLS equation (\ref{nls}), where the point $x = 0$
is a regular point on the infinite line $\mathbb{R}$.
\end{proof}

\begin{remark}
The shifted state (\ref{soliton-shifted-II})
corresponds to the NLS soliton in the homogeneous NLS equation (\ref{nls}),
which is translational invariant along the line $\mathbb{R}$. The eigenvalue count
of Theorem \ref{theorem-eigenvalues} and the instability result of Corollary
\ref{corollary-instability} are related to the symmetry-breaking perturbations,
which do not satisfy the reduction (\ref{Rsolutions}). These perturbations satisfy
the NLS equation (\ref{eq1}) on the graph $\Gamma$ but do not satisfy
the homogeneous NLS equation (\ref{nls}) on the line $\mathbb{R}$. Such symmetry-breaking
perturbations were not considered in \cite{M1,M2,M3}.
\end{remark}

\section{Variational characterization of the shifted states}

Here we give a simple argument suggesting that the spectrally stable shifted states in the case (ii)
with $K = 1$ and $a < 0$ are nonlinearly unstable under the NLS flow. 
This involves the variational characterization of the
shifted states in the graph $\Gamma$ as critical points of energy under the fixed mass,
where the mass and energy are defined by (\ref{mass}) and (\ref{energy}) respectively.

The mass and energy are computed at the shifted states (\ref{soliton-shifted-II}) as follows:
\begin{equation}
\label{mass-shifted}
Q(\Phi) = \left( \sum_{j=1}^K \alpha_j^{-2/p} \right) \| \phi \|^2_{L^2(\mathbb{R})}
\end{equation}
and
\begin{equation}
\label{energy-shifted}
E(\Phi) = \left( \sum_{j=1}^K \alpha_j^{-2/p} \right) \left( \| \phi' \|^2_{L^2(\mathbb{R})} - \| \phi \|^{2p+2}_{L^{2p+2}(\mathbb{R})}\right),
\end{equation}
where the constraint (\ref{alpha_const}) has been used. In the case (ii) with $K = 1$,
the mass and energy at the shifted states is the same as the mass and energy of
a free solitary wave escaping to infinity along the incoming edge. This property
signals out that the infimum of energy is not achieved, as is discussed in \cite{AdamiJFA}.

Furthermore, the constraint (\ref{alpha_const}) implies that
$\alpha_2,\dots,\alpha_N > \alpha_1$ (if $N \geq 3$). Pick the $j$-th outgoing edge for $2 \leq j \leq N$
and fix the mass at the level $\mu > 0$. Then, it is well-known \cite{AdamiJFA}
that the energy of a free solitary wave escaping to infinity along the $j$-th outgoing
edge is given by
\begin{equation}
E_j = - C_p \alpha_j^{\frac{4}{2-p}} \mu^{\frac{p+2}{2-p}} < - C_p \alpha_1^{\frac{4}{2-p}} \mu^{\frac{p+2}{2-p}} = E(\Phi),
\end{equation}
where $p \in (0,2)$ and $C_p$ is an universal constant that only depends on $p$.
Thus, a free solitary wave escaping the graph $\Gamma$ along any outgoing edge has a
lower energy level at fixed mass compared to the shifted state. This suggests that any shifted state is
energetically unstable.

Let us now give a simple argument suggesting nonlinear instability of the shifted states (\ref{soliton-shifted-II})
for $a < 0$ under the NLS flow. If $K=1$, it follows from the momentum balance equation (\ref{dPdt})
in Lemma \ref{lemma-momentum} that the momentum $P(\Psi)$ defined by (\ref{momentum})
is an increasing function of time if $m_1 = 0$ and
a decreasing function of time if $m_1 = 1$, for the two choices in (\ref{m_j}).
Indeed, we obtain the following chain of transformations by using
the boundary conditions in (\ref{H2}) and the constraint (\ref{alpha_const}):
\begin{eqnarray}
\label{bal-mom}
\frac{d}{dt} P(\Psi) & = & (-1)^{m_1+1} \left[ \sum_{j=2}^N
|\psi_j'(0)|^2 - \sum_{j=2}^N \sum_{i=2}^N
\frac{\alpha_1^{2/p}}{\alpha_j^{1/p} \alpha_i^{1/p}}\psi_j'(0) \overline{\psi_i}'(0) \right] \\
\nonumber
& = & (-1)^{m_1+1} \left( \sum_{j=2}^N \sum_{\substack{i=2 \\ i \neq j}}^N \frac{\alpha_1^{2/p}}{\alpha_i^{2/p}} |\psi_j'(0)|^2
- \sum_{j=2}^N \sum_{\substack{i=2 \\ i \neq j}}^N \frac{\alpha_1^{2/p}}{\alpha_j^{1/p} \alpha_i^{1/p}}\psi_j'(0) \overline{\psi_i}'(0) \right)\\
\nonumber
& = & \frac{1}{2} (-1)^{m_1+1} \sum_{j=2}^N \sum_{\substack{i=2 \\ i \neq j}}^N \frac{\alpha_1^{2/p}}{\alpha_j^{2/p} \alpha_i^{2/p}}
\left| \alpha_j^{1/p} \psi_j'(0) - \alpha_i^{1/p} \psi_i'(0) \right|^2
\end{eqnarray}
Hence $\frac{d}{dt} P(\Psi) \geq 0$ if $m_1 = 1$ and $\frac{d}{dt} P(\Psi) \leq 0$ if $m_1 = 0$.

Since the shifted states (\ref{soliton-shifted-II}) satisfies $P(\Phi) = 0$, monotonicity of
the momentum $P(\Psi)$ in time $t$ immediately implies nonlinear instability of
the shifted states (\ref{soliton-shifted-II}) with $a < 0$ under the NLS flow,
despite that these shifted states are spectrally stable by Corollary \ref{corollary-instability}.
Indeed, if $a < 0$ (or $m_1 = 1$), the value of the momentum $P(\Psi)$ is monotonically increasing in time as soon as the
right-hand side of (\ref{bal-mom}) is nonzero. Therefore, if $P(\Psi)$ is initially near zero,
which is the value of $P(\Phi)$ for every shifted state (\ref{soliton-shifted-II}) with $a \in \mathbb{R}$,
then $P(\Psi)$ grows far away from the zero value. This simple argument leads us to the following conjecture.

\begin{conjecture}
\label{conjecture-instability}
In the case $K = 1$, the branch of shifted states (\ref{soliton-shifted-II}) with $a < 0$
is nonlinearly unstable under the NLS flow.
\end{conjecture}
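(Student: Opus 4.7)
The plan is to argue by contradiction from the momentum monotonicity of Lemma \ref{lemma-momentum}. Fix a shifted state $\Phi$ of the form (\ref{soliton-shifted-II}) with $K=1$ and $a<0$, so that $m_1=1$ and $m_j=0$ for $j\ge 2$. Let $\mathcal{M}$ denote the two-parameter orbit consisting of all gauge rotations $e^{i\theta}\widetilde{\Phi}$ with $\theta\in\mathbb{R}$ and $\widetilde{\Phi}$ ranging over shifted states (\ref{soliton-shifted-II}) with negative shift. By the definition (\ref{momentum}) and the reality of each shifted state, the momentum functional $P$ vanishes identically on $\mathcal{M}$, and since $P$ is locally Lipschitz from $H^1_\Gamma$ to $\mathbb{R}$, any orbital stability statement for $\mathcal{M}$ would entail an a priori bound $|P(\Psi(t))|\le C\epsilon$ controlled by the stability threshold $\epsilon$.

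The first concrete step is to construct an initial perturbation with strictly positive momentum of size $O(\delta)$. Pick a nonnegative $\eta\in C_c^\infty(\mathbb{R}^+)$ with $\int\eta>0$, and extend it by zero on all edges except a chosen outgoing edge $j\in\{2,\dots,N\}$, so that $\eta\in H^1_\Gamma$. Setting $\Psi_\delta(0):=\Phi+i\delta\eta$, an integration by parts in (\ref{momentum}) gives
\begin{equation*}
P(\Psi_\delta(0)) = -2\delta\int_0^\infty\phi_j'(x)\eta(x)\,dx + O(\delta^2),
\end{equation*}
which is strictly positive of order $\delta$, since $\phi_j'(x)<0$ for all $x\in\mathbb{R}^+$ (the shifted tail in an outgoing edge is monotonically decreasing when $a<0$). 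By Lemma \ref{lemma-momentum} and (\ref{bal-mom}), the choice $m_1=1$ yields $\tfrac{d}{dt}P(\Psi_\delta)\ge 0$, so $P(\Psi_\delta(t))\ge c_\eta\delta>0$ for every $t\ge 0$ by global existence (Lemma \ref{lemma-global}). This is compatible with, but does not yet contradict, the a priori stability bound.

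To upgrade monotonicity to nonlinear instability, the goal is to show that $P(\Psi_\delta(t))$ must escape any prescribed $H^1_\Gamma$-neighborhood of $\mathcal{M}$ in finite time. The plan is to employ a modulation decomposition $\Psi_\delta(t)=e^{i\theta(t)}\widetilde{\Phi}_{b(t)}+w(t)$, valid under the stability hypothesis with $w(t)$ orthogonal to the gauge and shift directions and $\|w(t)\|_{H^1_\Gamma}\le\epsilon$. Since $\alpha_j^{1/p}\widetilde{\phi}_j'(0)$ coincides for all $j\ge 2$ on every shifted state, substituting the decomposition into (\ref{bal-mom}) leaves a positive-definite quadratic form in the boundary differences $\alpha_j^{1/p}w_j'(0)-\alpha_i^{1/p}w_i'(0)$, $2\le i<j\le N$, plus lower-order corrections. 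The target is to exhibit a finite time $T$ at which $\int_0^T\tfrac{d}{dt}P(\Psi_\delta)\,dt$ exceeds any $\epsilon$ threshold, producing the desired contradiction.

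The hard part is precisely this time-integrated lower bound, and two obstacles must be resolved. First, the boundary traces $w_j'(0)$ are controlled by the $H^2_\Gamma$-norm of $w$, not by its $H^1_\Gamma$-norm, so the contradiction should be executed at the $H^2_\Gamma$ level using the $H^2_\Gamma$-regularity branch of Lemma \ref{lemma-wellposedness} and then extended by approximation. Second, the modulation parameter $b(t)$ may drift adiabatically so as to absorb the momentum increment while $w(t)$ stays small; ruling this out requires combining the monotone-$P$ estimate with the energy gap $E(\Phi)>E_j$ from Section 7 (where $E_j$ is the energy of a free solitary wave escaping along outgoing edge $j$) to show that confinement of the flow to $\mathcal{M}$ for all time is energetically impossible. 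It is this interplay between the one-parameter modulation freedom along the family of shifted states and the strict off-orbit positivity of $dP/dt$ that constitutes the essential new ingredient the heuristic argument of the paper leaves open.
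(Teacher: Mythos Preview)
First, note that the paper does not prove this statement: it is explicitly left as a conjecture, preceded only by the heuristic observation that $P$ is monotone under the flow when $K=1$. Your proposal therefore attempts more than the paper does, and must be assessed on its own merits.

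The plan you outline is natural but does not close, and the gap is essentially the one you yourself flag. Monotonicity of $P$ together with $P(\Psi_\delta(0))\sim c_\eta\delta>0$ yields only $P(\Psi_\delta(t))\ge c_\eta\delta$; this is perfectly compatible with the stability bound $|P(\Psi(t))|\le C\epsilon$ once $\delta$ is taken small relative to $\epsilon$, so no contradiction arises without a mechanism forcing $P$ to grow past every fixed threshold. The right-hand side of (\ref{bal-mom}) vanishes exactly on the invariant class (\ref{Rsolutions}), and nothing in your argument prevents the perturbed flow from remaining arbitrarily close to that class for all time---the quadratic form in the boundary differences is positive \emph{semi}definite, not uniformly positive on any subspace you have isolated. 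Your two proposed remedies do not help: passing to $H^2_\Gamma$ makes the boundary traces well-defined but supplies no lower bound on them, and the energy inequality $E(\Phi)>E_j$ is irrelevant because energy is conserved and the entire orbit $\mathcal{M}$ sits on the level set $E=E(\Phi)$, so confinement near $\mathcal{M}$ is energetically permitted. A genuine proof would require either a quantitative virial-type estimate showing that the boundary differences in (\ref{bal-mom}) cannot remain integrably small along the perturbed flow, or a separate modulation argument tracking the drift of $a(t)$ toward $0$ and into the spectrally unstable regime $a>0$; neither ingredient is supplied, which is precisely why the paper records the statement as a conjecture rather than a theorem.
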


We also add more details on the travelling waves of the homogeneous NLS equation on the infinite line
given by (\ref{nls}). Every stationary solution $U_{\rm stat}(x) e^{it}$ to the NLS equation (\ref{nls})
is translated into a family of the travelling solutions with speed $c$ by the Lorentz transformation
\begin{equation}
\label{U-trav}
U_{\rm trav}(t,x) = U_{\rm stat}(x-ct) e^{it + i cx/2 - i c^2 t/4}, \quad x \in \mathbb{R}, \quad t \in \mathbb{R}.
\end{equation}
Computing the momentum $P(\Psi)$ given by (\ref{momentum}) at the solution $\Psi$, which is defined by (\ref{Ufunction}) with
(\ref{U-trav}), yields
\begin{equation}
\label{momentum-U}
P(\Psi) = \frac{c}{2} \alpha_1^{2/p} (-1)^{m_1+1} \| U_{\rm stat} \|_{L^2(\mathbb{R})}^2.
\end{equation}
If $a < 0$ (or $m_1 = 1$), then $\frac{d}{dt} P(\Psi) \geq 0$ and the increase of the momentum
can be compensated by a travelling wave (\ref{U-trav})
moving with the speed $c > 0$ as follows from (\ref{momentum-U}). The maximum of the shifted state
(\ref{soliton-shifted-II}) with $a < 0$
is located in the only incoming edge but it may move towards the vertex point at $x = 0$ along the travelling wave
(\ref{U-trav}).

On the other hand, if $a > 0$ (or $m_1 = 0$), then $\frac{d}{dt} P(\Psi) \leq 0$ and the decrease of the momentum
can be compensated by the same travelling wave (\ref{U-trav}) still moving with the speed $c > 0$,
as follows from (\ref{momentum-U}).
Therefore, we may anticipate that the maximum of the shifted state (\ref{soliton-shifted-II})
first moves towards the vertex point at $x = 0$ along the only incoming edge,
then splits into $N-1$ maxima in the $N-1$ outgoing edges and these $N-1$ maxima keep moving outward the vertex point
at $x = 0$, subject to additional spectral instability due to symmetry-breaking perturbations 
between $N-1$ outgoing edges, according to Corollary \ref{corollary-instability}.
This dynamical picture is in agreement with the variational consideration above, but the proof
of validity of this dynamical picture is beyond the scope of this work and will be considered elsewhere.
Therefore, we summarize this dynamical picture as another conjecture.

\begin{conjecture}
In the case $K = 1$, the shifted state (\ref{soliton-shifted-II}) with $a < 0$ leads to a solitary wave that
moves towards the vertex point at $x = 0$ along the only  incoming edge,
splits into $N-1$ solitary waves in the $N-1$ outgoing edges, which transform due to their spectral instability
while moving outward the vertex point at $x = 0$.
\end{conjecture}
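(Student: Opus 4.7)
The plan is to combine modulation theory along the one-parameter family of shifted states with the monotonicity of $P(\Psi)$ established in Lemma \ref{lemma-momentum}, using the homogenization reduction (\ref{Ufunction}) to transfer the dynamics onto the line whenever the reduction is approximately satisfied. Fix initial data $\Psi(0)$ close in $H^1_\Gamma$ to a shifted state $\Phi_{a_0}$ with $a_0 < 0$ in the sense of (\ref{soliton-shifted-II}). First I would split any admissible perturbation into a symmetric part (preserving the reduction (\ref{Rsolutions}), i.e.\ the last $N-1$ components agree after the $\alpha_j^{1/p}$ rescaling) and a symmetry-breaking part $V_\perp$. The symmetric part evolves by the homogeneous NLS (\ref{nls}) on $\mathbb{R}$, so classical orbital stability of the soliton on the line controls it, and the task reduces to controlling $V_\perp$, whose presence is precisely what drives the growth of $P(\Psi)$ via (\ref{bal-mom}).

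Second, I would introduce modulation parameters $(\theta(t),a(t),c(t))$ so that $\Psi(t) = e^{i\theta(t)}\, T_{c(t)}\Phi_{a(t)} + V_\perp(t) + R(t)$, where $T_c$ is the Lorentz-type boost obtained by folding (\ref{U-trav}) through (\ref{Ufunction}) and $R(t)$ is small and symplectically orthogonal to the neutral modes of the linearization at $\Phi_{a(t)}$. Standard modulation equations give $\dot{a}$, $\dot{c}$, $\dot{\theta}$ in terms of $V_\perp$ and $R$. The key identity, obtained by evaluating (\ref{momentum-U}) on the ansatz, is that $P(\Psi(t)) = \tfrac{c(t)}{2}\alpha_1^{2/p}\|\phi\|_{L^2}^2 + O(\|V_\perp\|^2 + \|R\|^2)$. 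Combining this with the strict monotonicity $\tfrac{d}{dt}P(\Psi)\geq 0$ (Lemma \ref{lemma-momentum}, sign chosen by $m_1 = 1$ for $a<0$), with strict inequality whenever the $N-1$ traces $\alpha_j^{1/p}\psi_j'(0)$ are not all equal, forces $c(t)$ to be nondecreasing and to strictly increase as long as $V_\perp$ is nontrivial. The traveling soliton on the reduced line then moves with positive speed, so the modulation parameter $a(t)$ — which encodes the position of the peak on that line — increases monotonically from $a_0<0$ through $0$ and into the regime $a(t)>0$. Translated back to the graph via (\ref{Ufunction}), this is exactly the statement that the peak moves toward the vertex along the single incoming edge and then emerges, simultaneously by symmetry of the reduction, as $N-1$ equal peaks in the outgoing edges.

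Third, once $a(t)>0$ the modulated state is a spectrally unstable shifted state by Corollary \ref{corollary-instability}, with $N-K-1 = N-2$ real positive eigenvalues of the linearized operator; the corresponding eigenvectors lie in $V_\perp$ (they break the equality among the outgoing components). Standard Grillakis–Shatah–Strauss / Henry–Perez–Wreszinski arguments then promote this spectral instability to nonlinear instability: along the orbit $V_\perp$ exits any fixed small ball in $H^1_\Gamma$ in finite time through an exponentially growing eigenmode, and this eigenmode is precisely a difference between the $N-1$ outgoing peaks. This realizes the final part of the conjecture — that the $N-1$ outgoing solitary waves transform due to their spectral instability while moving outward.

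The main obstacle will be handling the transition through $a = 0$, where the modulation breaks down because the tangent family collapses onto the half-soliton state (\ref{half-soliton}), whose linearization has a degenerate kernel of dimension $N-1$ (see the remark after Theorem \ref{theorem-eigenvalues}). In this window the implicit function theorem used to define $a(t)$ fails and the eigenvalue $\lambda_1 \in (\lambda_0,0)$ of $L_+$ crosses zero, so one must either match the two modulation charts across $a=0$ by means of the normal-form analysis of \cite{KP} for the half-soliton, or by working directly on the reduced line and using the scalar soliton as the modulating profile uniformly in $a$, letting the $(N-1)$-fold splitting be seen only in $V_\perp$. A secondary difficulty is closing the energy estimate for $R(t)$ under the modulation, because the natural coercivity of the second variation (\ref{second-variation-Lambda}) is lost when the unstable directions of $L_+$ at $a>0$ are no longer orthogonal to the kernel of $L_-$; bridging this with a virial-type bound supplied by the monotonicity of $P$ is where the argument will require the most care.
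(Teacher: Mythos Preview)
The statement you are attempting to prove is labeled a \emph{Conjecture} in the paper, and the paper offers no proof of it. The surrounding discussion in Section~7 provides only heuristic support: the energy comparison showing that a free solitary wave in any outgoing edge has lower energy at fixed mass, the momentum balance (\ref{bal-mom}) showing $\frac{d}{dt}P(\Psi)\geq 0$ when $m_1=1$, and the interpretation via the travelling wave (\ref{U-trav}) on the homogenized line. The authors explicitly write that ``the proof of validity of this dynamical picture is beyond the scope of this work.'' So there is no proof in the paper to compare your proposal against; what you have written is an attempt to prove an open statement.

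Your strategy is thoughtful and correctly isolates the ingredients the paper supplies, but it has genuine gaps beyond the one you flag at $a=0$. First, the boosted profile $T_c\Phi_a$ obtained by folding (\ref{U-trav}) through (\ref{Ufunction}) is only a solution of the NLS on $\Gamma$ when the reduction (\ref{Rsolutions}) holds exactly; outside that invariant manifold there is no Galilean symmetry on the graph, so your modulation parameter $c$ and the identity $P(\Psi)=\tfrac{c}{2}\alpha_1^{2/p}\|\phi\|_{L^2}^2+O(\cdot)$ have no intrinsic meaning and the error terms are not controlled by the ansatz. Second, the decomposition into a ``symmetric'' part and $V_\perp$ does not decouple the evolution: the symmetric part does not satisfy the scalar NLS (\ref{nls}) once $V_\perp\neq 0$, so you cannot simply invoke orbital stability on the line for it. Third, since the $a<0$ state is spectrally stable, $V_\perp$ stays $O(\varepsilon)$ on the linear time scale, whence (\ref{bal-mom}) gives $\frac{d}{dt}P=O(\varepsilon^2)$; pushing $a$ through zero then requires control over times of order $\varepsilon^{-2}$ or longer, and nothing in your outline closes the estimate on $R$ over such times. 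Finally, the last assertion---that after splitting the $N-1$ peaks ``transform'' and move outward---is a soliton-resolution-type statement for a nonintegrable problem on a graph, well beyond what GSS/HPW instability arguments deliver; those arguments only tell you the solution leaves a neighbourhood of the orbit, not where it goes. In short, your outline is a plausible program, but as written it is a sketch toward a conjecture rather than a proof, and the paper itself makes no stronger claim.
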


\end{document}